\newcommand{\newabstract}[1]{%
  \par\bigskip
  \csname otherlanguage*\endcsname{#1}%
  \csname captions#1\endcsname
  \item[\hskip\labelsep\scshape\abstractname.]
}
\newtheorem{theorem}{Theorem}[section]
\newtheorem{lemma}[theorem]{Lemma}
\newtheorem{corollary}[theorem]{Corollary}
\newtheorem{proposition}[theorem]{Proposition}
\newtheorem{example}[theorem]{Example}
\theoremstyle{definition}
\newtheorem{remark}[theorem]{{\bf Remark}}
\newtheorem{definition}[theorem]{Definition}
\def\C{\mathbb C}
\def\R{\mathbb R}
\def\N{\mathbb N}
\def\Z{\mathbb Z}
\def\D{\mathbb D}
\def\bfK{{\boldsymbol K}}
\def\bfU{{\boldsymbol U}}
\def\bfk{{\boldsymbol k}}
\def\({{\rm (}}
\def\){{\rm )}}
\begin{document}

\numberwithin{equation}{section}

\title{Analyticity and supershift with regular sampling}

\author[F. Colombo]{F. Colombo}
\address{(FC) Politecnico di
Milano\\Dipartimento di Matematica\\Via E. Bonardi, 9\\20133 Milano\\Italy}
\email{fabrizio.colombo@polimi.it}

\author[I. Sabadini]{I. Sabadini}
\address{(IS) Politecnico di
Milano\\Dipartimento di Matematica\\Via E. Bonardi, 9\\20133 Milano\\Italy}
\email{irene.sabadini@polimi.it}

\author[D. C. Struppa]{D. C. Struppa}
\address{(DCS) The Donald Bren Presidential Chair in Mathematics\\ Chapman University, Orange, CA 92866 \\ USA}
\email{struppa@chapman.edu}

\author[A. Yger]{A. Yger}
\address{(AY) IMB, Universit\'e de Bordeaux, 33405, Talence, France}
\email{yger@math.u-bordeaux.fr}

\thanks{The first and second author are supported by MUR grant Dipartimento di Eccellenza 2023-2027.}

\begin{abstract}
The notion of supershift (in itself a generalization of the notion of superoscillation arising in quantum mechanics) expresses the fact that the sampling of a function in an interval allows to compute the values of the function far from the interval.
In this paper we study the relation between supershift and real analyticity.
 We use a classical result due to Serge Bernstein to
show that real analyticity for a complex valued function implies a strong form of supershift. On the other hand, we use a
parametric version of a result by
Leonid Kantorovitch to show that the converse is not true.
We additionally study the stability of functions that satisfy suitable
supershift requirements under multiplication by complex numbers
 or primitivization.
\end{abstract}

\maketitle

\noindent{\bf Keywords}. Real analyticity; supershift; superoscillation; sampling.

\noindent{\bf AMS classification} 42C10, 26E05

\section{Introduction}
In their seminal paper \cite{AAV88}, Aharonov and his collaborators
introduced a new concept in quantum mechanics, the so-called {\it weak
value}, and as a consequence they obtained some apparently paradoxical
results (for a complete review of the notion of weak value see \cite{dressel}). One of the
reasons why such results were possible is the discovery of a family of
functions from ${\mathbb R}$ to ${\mathbb C}$ with the surprising
property that they are completely determined, on the real axis, by their
values on a countable subset of a finite interval in $\mathbb R$. The
first example of such a function is the exponential function $f(x,a) : a
\longrightarrow e^{iax}$ regarded as a function of the real variable
$a$, with $x$ as a parameter. Specifically, it is possible to write the
function $a\longmapsto e^{iax}$, for arbitrary large values of $a$, in
terms of the values $e^{i\lambda_{N,\nu}x}$ with
$\lambda_{N,\nu}=1-2\nu/N$ for $N\in \N^*$ and $0\leq \nu \leq N$, and
therefore $|\lambda_{N,\nu}|\leq 1.$ To show that this is the case, one
considers the function
\begin{equation}\label {Fn}
F_N(x,a):=\big(\cos\frac{x}{N}+ia\sin\frac{x}{N}\big)^N,
\end{equation}
which (by simply using the Euler formulas for sine and cosine, and the
Newton binomial formula) can be rewritten as
$$
F_N(x,a)=\sum_{\nu =0}^N C_\nu(N,a)e^{i(1-2\nu/N)x},
$$
where
\begin{equation}\label{exampleaharonov}
C_\nu(N,a):= {\binom{N}{\nu}\Big(\frac{1+a}{2}\Big)^{N-\nu}
\Big(\frac{1-a}{2}\Big)^{\nu}}.
\end{equation}
Since (\ref{Fn}) immediately implies that $\lim_{N\rightarrow +\infty}
\, F_N(x,a)=e^{iax}$, we obtain that the value of the function $a \longmapsto f(x,a) =e^{iax}$ for $a$ arbitrarily large
is retrievable if one knows the values of the function {on} the
{countable} set of values $\lambda_{N,\nu}\in [-1,1]$.

Since in this particular example the variable $a$ is the frequency of
the oscillating exponential function, now to be considered as a function
of $x$, this phenomenon is usually referred to as {\it superoscillation}
and we call a sequence such as $\{x \longmapsto F_N(x,a)\}_{N\geq 1}$ a
{\it superoscillating sequence}, and the limit function a {\it
superoscillating function.}

The fact that large oscillations can be obtained by superposition of
much smaller oscillations, so that a highly oscillating function could
be seen as a limit of band-limited functions, led to a series of
surprising physical phenomena, some of which are described in
\cite{APR91,APR20} and were recently the subject of a more popular
discussion in \cite{McCo22}.

One of the first questions that Aharonov posed was whether this interesting phenomenon would be conserved if one evolves a superoscillating sequence according to the Schr\"odinger equation for the free particle. As we have shown in \cite{ACSST13,ACSST15}, the answer is positive, indeed one can prove that
the solution to the Cauchy problem
$$
i\frac{\partial \psi(x,t)}{\partial t}=-\frac{\partial^2 \psi(x,t)}{\partial x^2},\ \ \ \ \psi(x,0)=F_N(x,a),
$$
is given by
\begin{equation}\label{psiNN}
\psi_N(x,t)=\sum_{k=0}^N C_k(N,a) e^{i(1-2k/N) x } e^{-i(1-2k/N)^2t},
\end{equation}
and that  $\psi_N(x,t)$ converges to $e^{iax-ia^2t}$ uniformly on every compact set in $\R^2$.

This result is of independent interest, but it also shows the existence of a simple case of superoscillations in two variables. This realization led to the development of an independent theory of superoscillations in several variables, see \cite{ACJSSST22,ACSST16}.
Once the case of the free particle is settled, the next natural question is to evolve a superoscillatory sequence with different potentials. The authors have obtained a large number of results in this direction but even the next simplest case, namely the harmonic oscillator studied in \cite{BCSS14}, highlights a completely new phenomenon. Indeed we proved that the Cauchy problem for the quantum harmonic oscillator
$$
i\frac{\partial \psi(t,x)}{\partial t}=\frac{1}{2}\Big( -\frac{\partial^2 }{\partial x^2} +x^2 \Big)\psi(t,x),\ \ \ \ \psi(0,x)=F_N(x,a)
$$
has the solution
\begin{equation}\label{psiNNN}
\psi_N(t,x)=(\cos t)^{-1/2}e^{-(i/2) x^2\tan t}\sum_{k=0}^NC_k(N,a) e^{ ix(1-2k/N)/\cos t -(i/2)(1-2k/N)^2\tan t}
\end{equation}
and when we take the limit for $N\to \infty$ we get
$$
\lim_{N\to \infty}\psi_N(t,x)=(\cos t)^{-1/2} e^{ -(i/2) x^2\tan t} e^{ -(i/2) a^2\tan t +iax/\cos t}.
$$

As one can see, the function in \eqref{psiNNN} is not superoscillating as the function in \eqref{psiNN}. The phenomenon that arises is now the fact that a function can be described in points away from the origin if one knows its value in enough points near the origin. This phenomenon (of which superoscillations is the simplest example) is what we called supershift (a precise definition was given in \cite{CGS19} and will be repeated later on in Section 2). As it turned out, we were able to demonstrate this phenomenon in a very large number of cases, and we noticed that the possibility of doing so seemed to be a direct consequence of the analyticity of the functions for which supershift was to be demonstrated. This raises the very natural question of whether the two notions are indeed connected. In this paper we will show that in fact analyticity implies supershift (this result is already implied in some of our previous works) but that the converse is not true. This result goes full circle because one of Aharonov's early remarks was that, in some naive sense,
the property of superoscillations can be considered akin to analyticity.
Indeed, an analytic function is completely determined by {countably}
many data at the origin (the value, and all of its derivatives) and a
superoscillating function is completely determined by {countably} many
data near the origin (its values in the points $\lambda_{N,\nu}$).

The plan of the paper is the following.
Section 2 contains the various definitions and refinements of the notions of superoscillation and supershift that we shall use later.
In Section \ref{sect2new} we show that the notion of analyticity is
stronger than a suitably formalized notion of supershift.
Specifically, in Theorem \ref{sect2new-prop2}, using a parametric reformulation of a classical result due to Serge Bernstein we
show that real analyticity for  a complex valued function implies a strong form of supershift. On the other hand, we use a
parametric version of a result by
Leonid Kantorovitch to show, see Theorem
\ref{sect2new-prop3}, that the converse is not true.
In Section 4 we additionally study the stability of functions that satisfy suitable
supershift requirements under essential operations such as
complex multiplication or primitivization (Theorems
\ref{sect2new-thm3} and \ref{sect2new-thm4}).

\section{Preliminary results}
In this section we will offer some preliminary definitions and results that will be useful in the sequel. To begin with,
let
\begin{equation}\label{sect1-eq0}
H = \begin{bmatrix}
& h_{1,0},...,h_{1, \nu(1)}\\
& h_{2,0},h_{2,1},..., h_{2, \nu(2)}\\
& \vdots \\
& h_{N,0},h_{N,1},h_{N,2},...,h_{N,\nu(N)}\\
& \vdots  \end{bmatrix}
\end{equation}
be a collection of real numbers (interpreted as frequencies) which all belong to the closed interval $[-1,1]$ {with $h_{N,\nu} \geq h_{N,\nu+1}$ for $N\in \N^*$ and $0\leq \nu \leq \nu(N)-1$}.
The simplest example of such double index sequence can be obtained by setting $\nu(N)=N$ and $h_{N,\nu}=1-2(\nu+1)/N$, $0\leq \nu \leq N-1$.
\begin{definition}
A sequence of generalized trigonometric polynomials $(T_{H,N})_{N \geq 1}$  of the form
$$
{T_{H,N}}(x) = \sum\limits_{\nu=0}^{\nu(N)} C_{H,\nu}(N)\, \exp( ix h_{N,\nu}),\quad ({\rm with}\ C_{H,\nu}(N)
\in \C \quad {\rm for}\ N \in \N^*, \ 0\leq \nu\leq \nu(N))
$$
is called an {\it $H$-sequence of generalized trigonometric polynomials}.
\end{definition}
Given an open subset $A\subset \R$, an {\it $(A,H)$-sequence of generalized trigonometric polynomials}
$\{T_{H,N}[a]:a\in A\}_{N\geq 1}$ is by definition an $A$-parametrized $H$-sequence of generalized trigonometric polynomials such that each complex amplitude $C_{H,\nu}(N,a)$
depends continuously on
$a\in A$, for any $N\in \N^*$ .

\begin{definition}[Superoscillating sequence] An $(A,H)$-sequence of generalized trigonometric polynomials is said to be {\it superoscillating} if there are two continuous functions $g_H~: A \rightarrow \C$ and $C_H : A \rightarrow \R$ such that both $V=\{a \in A\,:\, C_H(a) \not=0,\ |g_H(a)| > 1\}$ and the open subset $U$ of points $x\in \R$ about which
\begin{equation}\label{sect1-eq1ter}
\lim_{N\rightarrow +\infty} T_{H,N}[a]\, (x) = C_H(a)\, \exp (i\, g_H(a)\, x),
\end{equation}
locally uniformly with respect to $(a,x)$, are non-empty open subsets respectively of $A$ and
$\R$.  The set $U\cap V$ is then called {\it the superoscillating subset} of the $(A,H)$-superoscillating sequence $\{T_N[a]\,:\, a\in A\}_{N\geq 1}$.
\end{definition}
\begin{remark}
What motivates the terminology here is the fact that, for any $a\in A$, each
$T_{H,N}[a]$ is a low-band signal, with frequencies in $[-1,1]$, while the signal
$x \mapsto C_H(a_0)\, e^{i g_H(a_0) x}$ is a true elementary oscillating signal with frequency $g_H(a_0)$ outside
$[-1,1]$ as soon as
$$
a_0 \in \{a\in A\,:\, C_H(a)\not=0,\ |g_H(a)| >1\} \not= \emptyset.
$$
\end{remark}

For a further insight to superoscillations see also \cite{bcss23}.
Throughout this paper, we shall consider $(\R,H)$-superoscillating sequences subordinate to the choice of $g= {\rm Id}_\R$ and $C\equiv 1$.

Generalizing the original description of Aharonov, it is easy to realize prototypical examples of superoscillating $(\mathbb{R},H)$-sequences of generalized trigonometric polynomials as we describe below.

\begin{definition}[Regular sampling]
We will call regular sampling of the frequency interval $[-1,1]$ the sampling defined by $H^{\boldsymbol \epsilon}=[h_{N,\nu}^{\boldsymbol \epsilon}]$ where
$\boldsymbol \epsilon= (\epsilon_N)_{N\geq 1}$ is a sequence of elements in $[0,1[$ which tends to $0$ when $N$ tends to infinity and
\begin{equation}\label{sect1-eq3}
h_{N,\nu}^{\boldsymbol \epsilon} = 1 - 2\, \Big(\frac{\nu + \epsilon_N (N-\nu)}{N}\Big), \ 0\leq \nu \leq N~.
\end{equation}
\end{definition}
\begin{remark}
The $(\R,H^{\boldsymbol \epsilon})$-sequence $\{T_N^{\boldsymbol \epsilon} [a]\,:\, a\in \R\}_{N\geq 1}$, where
\begin{equation}\label{sect1-eq4}
\begin{split}
T_{N}^{\boldsymbol \epsilon}[a]\, (x) & = \sum\limits_{\nu=0}^N \binom{N}{\nu}\, \Big(\frac{1+a}{2}\Big)^{N-\nu}
\Big(\frac{1-a}{2}\Big)^{\nu}
\exp \big( i h_{N,\nu}^{\boldsymbol \epsilon}\, x\big) \\
& = \exp (i\, \epsilon_N x)\, \Big(
\frac{1+a}{2}\, \exp \Big(ix\, \frac{1-\epsilon_N }{N} \Big) +
\frac{1-a}{2}\, \exp \Big(- ix\, \frac{1-\epsilon_N}{N}\Big)\Big)^N \\
& = \exp (i\, \epsilon_N x)\, \Big( \cos
\Big(x\, \frac{1-\epsilon_N}{N}\Big) + i\, a\, \sin \Big(x\, \frac{1-\epsilon_N}{N}\Big)\Big)^N
\end{split}
\end{equation}
is superoscillating on $U=\R$.
\end{remark}

\begin{remark}
When the array $H$ in \eqref{sect1-eq0} is such that
$\forall\, N \in \N^*,\ \nu(N)=N$ and so ${\rm card}\, \{h_{N,\nu}\,:\, \nu=0,...,\nu(N)\} = N+1$,
the sequence of Lagrange-Hermite interpolators (with respect to the parameter $a$)
\begin{equation}\label{sect1-eq5bis}
\Big\{ x \longmapsto T_{H,N}^{\rm Lag}[a](x) := \sum\limits_{\nu=0}^N
\Big(\prod_{\nu'\not=\nu}
\frac{a - h_{N,\nu'}}{h_{N,\nu} - h_{N,\nu'}}\Big)\, e^{ih_{N,\nu} x}\,:\, a\in \R\Big\}
\end{equation}
is also superoscillating since
\begin{equation}\label{sect1-eq5ter}
\begin{split}
\Big| e^{iax} -  T_{H,N}^{\rm Lag}[a](x)\Big| = \frac{\prod_{\nu=0}^N |a-h_{N,\nu}|}{(N+1)!}
\sup_{|\xi|\leq \max (1,|a|)} \Big|
\Big(\frac{\partial}{\partial a}\Big)^{N+1} [e^{iax}] (\xi)\Big| \leq
\frac{\big((|a|+1)|x|\big)^{N+1}}{(N+1)!}
\end{split}
\end{equation}
for any $(a,x)\in \R^2$ according to the expression of the reminder term in the Lagrange interpolation formula, see \cite[Theorem 2.2]{ACSSST21}. Such a superoscillating sequence $\{T_{H,N}^{\rm Lag}[a]\,:\, a\in \R\}_{N\geq 1}$ could be interpreted as corresponding to an irregular sampling of the low-frequency domain $[-1,1]$, which occurs as soon as an infinitely many rows $H_{N_\iota}$ of the array
$H$ are such that the function $\nu \in \{0,..., N_\iota -1\} \mapsto h_{N_\iota,\nu}- h_{N_\iota,\nu+1}>0$ is not constant.
\end{remark}

Let us introduce here the important concept of {\it Supershift Property $({\rm SP})_{\mathscr F}$}.

\begin{definition}\label{sect1-def1}(Supershift Property $({\rm SP})_{\mathscr F}$)
Let $A$ be an open interval of $\R$, possibly $\R$ itself, which contains $[-1,1]$ and let $\psi : a \in A \longmapsto \psi_a\in \mathscr F$ be a continuous map from
$A$ to a topological $\C$-vector space $\mathscr F$. Let
$$\left\{{T_{H,N}}[\lambda](x) = \sum\limits_{\nu=0}^{\nu(N)} C_{H,\nu}(N,\lambda)\, \exp( ix h_{N,\nu})\,:\, \lambda\in \R\right\}_{N\geq 1}
$$
be an
$(\R,H)$-superoscillating sequence. The map
$\psi$ is said to satisfy the {\it Supershift Property} $({\rm SP})_{\mathscr F}$ on $A$ with respect to $\{T_{H,N}[\lambda]\,:\, \lambda\in \R\}_{N\geq 1}$ if the sequence of functions
\begin{equation}\label{sect1-eq6}
a \in A \longmapsto \sum\limits_{\nu=0}^{\nu(N)}
C_{H,\nu} (N,a)\, \psi_{h_{N,\nu}} \in \mathscr F,\quad N=1,2,...
\end{equation}
converges to $\psi_a$ in the space of continuous functions $\mathcal C(A,\mathscr F)$, with respect to the topology of uniform convergence on any compact subset, i.e.
$$
\lim_{N\to\infty}\sum\limits_{\nu=0}^{\nu(N)}
C_{H,\nu} (N,a)\, \psi_{h_{N,\nu}}=\psi_a .
$$
\end{definition}
\begin{remark}
The reader will notice that this definition reduces,
 for {$C_{\nu} (N,a)$} as in
\eqref{exampleaharonov}, to the original example of Aharonov, which can in fact be obtained by taking $A={\mathbb{R}}$, and {$H=H^{\boldsymbol 0}:= \{1-2\nu/N\,:\, N\in \N^*,\ 0\leq \nu \leq N\}$}.
\end{remark}

Given a topological $\C$-vector space $\mathscr F$, we also introduce
for continuous maps from an open interval $A\subset \R$ with length strictly larger than $2 (=
{\rm length}\, ([-1,1])$) to $\mathscr F$, the so-called
{\it Translation-Commuting Supershift Property}
$({\rm TCSP})_{\mathscr F}$ with respect to an $(\R,H)$-superoscillating sequence.

\begin{definition}\label{sect1-def2} (Translation-Commuting Supershift Property $({\rm TCSP})_{\mathscr F}$)
Let $A$ be as above and for $a \in A$, let $\psi_a\in \mathscr F$ be a continuous map from
$A$ to $\mathscr F$. Let $\{T_{H,N}[\lambda]\,:\, \lambda\in \R\}_{N\geq 1}$ be a
$(\R,H)$-superoscillating sequence. The continuous map
$\psi$ is said to satisfy the {\it $\mathscr F$-Translation-Commuting Supershift Property $({\rm TCSP})_{\mathscr F}$} on $A$ with respect to the superoscillating sequence $\{T_{H,N}[\lambda]\,:\, \lambda\in \R\}_{N\geq 1}$ if the sequence of functions
\begin{equation}\label{sect1-eq8}
\left(\sum\limits_{\nu=0}^{\nu(N)}
C_{H,\nu} (N,a)\, \psi_{a' + h_{N,\nu}}\right) \subset \mathscr F,\quad N=1,2,...
\end{equation}
defined for
$$
(a,a') \in \mathbb A := \big\{(a,a')
\in \R \times A\,:\, a'+ [-1,1] \subset A,\ a+a'\in A\big\}
$$
converges to $\psi_{a + a'}$ in $\mathcal C(\mathbb A,\mathscr F)$ with respect to the topology of uniform convergence on any compact subset of $\mathbb A$.
\end{definition}
Observe that $({\rm TCSP})_{\mathscr F}$ on an open interval $A$ of $\mathbb R$ with respect to
such an $(\R,H)$ superoscillating sequence
implies $({\rm SP})_{\mathscr F}$ with respect to the same superoscillating sequence provided that $A$ contains $[-1,1]$.
\begin{remark}
If $A=[-\alpha, \beta]$ with $\alpha, \beta>1$, then the set $\mathbb A$ can be equivalently described as
$$
\mathbb A=\{(x,y)\in\mathbb R^2\ : 1 -\alpha\leq y\leq -1+\beta,\ -\alpha\leq x+y\leq \beta\}.
$$
\end{remark}
Let $\mathcal{H}(\mathbb{C})$ be the space of entire functions and let us consider
\begin{equation}\label{sect1-eq9}
{\rm Exp}(\C)=\{\Phi\in \mathcal{H}(\C)\ :\
\exists\, R >0 \quad {\rm such\ that}\quad \sup_{z\in \C} e^{-R |z|} |\Phi(z)| = 0\}.
\end{equation}
For any $R>0$, let also consider the subspace of  ${\rm Exp}(\C)$  defined by:
\begin{equation}\label{sect1-eq10}
{\rm Exp}_{<R}(\C)=\{\Phi\in \mathcal{H}(\C)\ :\
\exists\, \varepsilon \in\,  ]0,R]\quad {\rm such\ that}\quad
\sup_{z\in \C} e^{-(R-\epsilon)|z|} |\Phi(z)| < + \infty \}.
\end{equation}
Both spaces are topological spaces with their AU-structures, see
for example \cite[Definition 4.1.10]{ACSST17A}. We recall that the dual space of ${\rm Exp}_{< R}(\C)$
is isomorphic to $\mathcal{H}(\D_\C (0,R))$, see \cite[\textsection 6, Example 8]{Tayl68} or also \cite[Proposition 4.1.16]{ACSST17A}.\\

Let $A$ be an open interval of $\R$ which contains $[-1,1]$. Among the $(\R,H)$-superoscillating sequences,
those for which the sequence
\begin{equation}\label{sect1-eq11}
\Big( z \in \C  \longmapsto \sum\limits_{\nu=0}^{\nu(N)} C_{N,\nu}(N,a) e^{ih_{N,\nu} z}\Big)_{N\geq 1}
\end{equation}
converges locally uniformly with respect to $a\in A$ towards $z\mapsto e^{iaz}$
in ${\rm Exp}(\C)$
(or at least in ${\rm Exp}_{<R}(\C)$ for some $R> \sup_A |a|>1$ when $A$ is bounded)
will be of particular interest for us.

Let $\mathscr F$ be a topological $\C$-vector space and let $\bfK : {\rm Exp}(\C) \rightarrow \mathscr F$ be a continuous operator.
Then the continuous map
\begin{equation}\label{sect1-eq11bis}
\psi_{\bfK}~: a \in A \longmapsto \bfK (z\mapsto e^{iaz}),
\end{equation}
(or, more briefly, $\bfK (e^{iaz})$)
satisfies $({\rm SP})_{\mathscr F}$ on $A$ with respect to the $(\R,H)$-superoscillating sequence
which is considered here. The same argument applies to the continuous operator
$\bfK_{R} : {\rm Exp}_{< R} (\C) \rightarrow \mathscr F$
and the continuous map
$$
\psi_{\bfK_R}~: a\in A \longmapsto \bfK_{R} (e^{iaz})\ {\rm when}\ A\subset ]-R,R[.
$$
  Moreover, since the Taylor sequence
\begin{equation}\label{taylor}
\Big(\sum\limits_{\kappa =0}^M (i a)^\kappa \, \frac{z^\kappa}{\kappa!}\Big)_{M\geq 0}
\end{equation}
converges in ${\rm Exp}(\C)$  towards $z \mapsto e^{iaz}$ locally uniformly with respect to
$a \in A$, one has
\begin{equation}\label{sect1-eq12A}
\begin{split}
\psi_{\bfK} (a) & = \lim\limits_{M\rightarrow +\infty} \sum\limits_{\kappa =0}^M
\bfK \Big(\frac{(iz)^\kappa}{\kappa!}\Big)\, a^\kappa
\end{split}
\end{equation}
locally uniformly with respect to $a\in A$, which shows that $\psi_{\bfK}$ inherits analyticity. Similarly, when \eqref{taylor} converges in ${\rm Exp}_{<R}(\C)$, in the case
$A \subset ]-R,R[$, then the limit function
\begin{equation}\label{sect1-eq12B}
\begin{split}
\psi_{\bfK_R} (a) & = \lim\limits_{M\rightarrow +\infty} \sum\limits_{\kappa =0}^M
\bfK_R\Big(\frac{(iz)^\kappa}{\kappa!}\Big)\, a^\kappa
\end{split}
\end{equation}
also inherits analyticity.
More precisely, given any element $T$ in the topological $\C$-dual of $\mathscr F$, $T\circ \psi_{\bfK}$ is the restriction to $A$ of an entire function, while $T\circ \psi_{\bfK_\R}$ is the restriction to
$A$ of a holomorphic function in $D_\C (0,R)$. It is easy to see that such setting can be easily modified when $A$ is any open interval of $\R$ with length strictly larger than $2$. If the $(\R,H)$-superoscillating sequence is such that the sequence of entire functions
$$
\Big( z \in \C  \longmapsto \sum\limits_{\nu=0}^{\nu(N)} C_{N,\nu}(N,a) e^{i(a'+ h_{N,\nu}) z}
\Big)_{N\geq 1}
$$
parametrized now by $(a,a') \in \mathbb A$ as in \eqref{sect1-eq8}
converges in ${\rm Exp}(\C)$ (or at least in ${\rm Exp}_{<R}(\C)$ in case
$A \subset ]-R,R[$) to $z\longmapsto \exp (i(a+a')z)$, this time locally uniformly with respect
to both parameters $(a,a')\in \mathbb A$, the continuous maps \eqref{sect1-eq11bis}
satisfy the $({\rm TCSP})_{\mathscr F}$ property on $A$.
Such continuous functions inherit on $A$ the analyticity property
\eqref{sect1-eq12A} in view of the continuity of $\bfK$ (or $\bfK_R$). \\

Let us now consider a specific Cauchy problem for the  Schr\"odinger equation.
 Let $U$ be an open interval in ${\mathbb{R}}$, and consider for $a\in {\mathbb{R}}$
\begin{equation}\label{sect1-eq7}
\Big(i \, \frac{\partial}{\partial t} + \frac{\partial^2}{\partial x^2} - V(x)\Big) \psi(t,x) = 0,\quad
\psi(0,x) = e^{iax},
\end{equation}
where $x\in U\mapsto V(x)$ is a suitable real continuous potential. The most classical examples of open sets $U\subset \R$ which we shall consider are $U= \R$ or $U=\R_{>0}$. When a Green function $K_V~: (x,y,t)\in U\times U \times \R_{>0} \longmapsto  K_V (x,y,t)$ is explicitly known for such Cauchy problem, the evolution of the initial datum
$x \mapsto \psi(0,x) = e^{iax}$, when $a\in \R$, is described (at least formally), by
$$
{\psi_a(t,x)} = \int_U e^{iay}\, K_V(x,y,t)\, dy.
$$
The classical potentials $V$ we have in mind are {\it quadratic}. For example
 $U=\R$, $V(x) = x^2/2$ (quantum harmonic oscillator), see
\cite[\textsection 6.4]{ACSST17A} or also \cite[\textsection 5.4 \& 6]{CSSY22}; $U=\R_{>0}$, $V(x) = \varpi/x^2$ with $\varpi>0$ an absolute constant (centrifugal potential), see \cite[\textsection 5.3]{CSSY22}, or possibly the more general case where $x\mapsto V(t,x)$ is quadratic in $x$ as a polynomial, see \cite{Wolf81}. Under some requirements imposed to the explicit expression of the Green kernel
$K_V$, among them the essential fact that {\it it extends as a function of $y$ to an holomorphic function of $z$ in a single (or double) sector with vertex $0$ about the real axis}, see \cite[Assumption 3.1]{ABCS22}, the formal expression
$$
\int_U e^{iay}\, K_V (x,y,t)\, dy
$$
can be realized concretely as a regularized integral in the Fresnel sense, see
\cite[\textsection 4 \& \textsection 5]{CSSY22} and \cite[\textsection 2 \& \textsection 3]{ABCS22},
which implies that
\begin{equation}\label{2star}
\begin{split}
&  e^{iax}  \longmapsto \int_{U} e^{iay}
K_V(x,y,t)\, dy,\quad (t,x) \in \R_{>0} \times U\ {\rm or} \ \R_{\geq 0}\times U  \\
&  e^{iax} \longmapsto \Big\langle \varphi(t,x)\,,\, \int_{U}\, e^{iay}\, K_V (x,y,t)\, dy \Big\rangle,\quad  \varphi \in \mathcal D(\R_{>0}\times U,\C)\ {\rm or}\  \mathcal D(\R_{\geq 0}\times U,\C)
\end{split}
\end{equation}
are realized in terms of the action of continuous operators from ${\rm Exp}(\C)$ (or ${\rm Exp}_{<R}(\C)$ for some $R>0$) to $\C$. Therefore the maps which are obtained by composition on the right with the map (parametrized as in \eqref{2star} by $(t,x)$ or $\varphi$, considered as elements in $\mathscr{F}'$) with the map which associates to $a\in A \subset \R$ the initial datum $x\mapsto e^{iax}$ {\it inherit automatically real analyticity} besides the properties $({\rm SP})_\C$ or
$({\rm TCSP})_\C$ with respect to particular $(\R,H)$-superoscillating sequences. Such is the case, in particular, when the superoscillating sequence which is involved is the $(\R,H^{\boldsymbol \epsilon})$-superoscillating sequence
defined by \eqref{sect1-eq3} and \eqref{sect1-eq4}.
\\

We conclude this section by discussing an example in which the evolution of $e^{iax}$ can be calculated explicitly in the sense of distributions and yet real analyticity does not follow.
\begin{example}{\rm
We consider the case when $U=\R$ and $V(x) =
\sum_{\ell\in \Z} \widehat V(\ell)\, e^{i\ell x}$ is a smooth non-constant $2\pi$-periodic real potential independent of $t$, the simplest example being $V(x) = \cos (x-\phi)$ with $\phi\in \R$. For each $t>0$ and $\ell\in \Z$, let
$\bfk_{t,\ell}: \ell^\infty_\Z \longrightarrow \ell^\infty_\Z$ be the isometric operator defined by
$$
\bfk_{t,\ell}:\ (u_\kappa)_{\kappa \in \Z}\in \ell^\infty_\Z \longmapsto  \big( -i\, e^{i (\kappa^2 - (\kappa-\ell)^2) t}\, u_{\kappa -\ell}\big)_{\kappa \in \Z} \in \ell^\infty_\Z.
$$
Inspired by the formal computations performed in \cite{GS86}, one can prove, see \cite[\textsection 4]{CSSY21}, that, in the sense of distributions on $\R_{>0}\times \R$,
\begin{equation}\label{sect1-eq13}
\psi_a (t,x) = e^{-i a^2 t} e^{iax}\, \sum\limits_{\kappa \in \Z}
(e^{iat})^{-2\kappa}\, v_{t,\kappa} (e^{ia (\cdot)})\,
e^{-i \kappa^2 t}\, e^{i\kappa x},
\end{equation}
where
\begin{equation}\label{sect1-eq14}
\big(v_{t,\kappa} (e^{ia(\cdot)})\big)_{\kappa \in \Z} = \exp
\Big( \int_0^t \Big( \sum\limits_{\ell \in \Z} \widehat V(\ell)\, (e^{ia\tau})^{2\ell} \, \bfk_{\tau,\ell}\Big)\, d\tau\Big) (\delta_0).
\end{equation}
One can interpret \eqref{sect1-eq13}, when combined with \eqref{sect1-eq14}, as a substitute for
the Green representation formula
\begin{equation}\label{SOLUZvarphi}
 \psi_a(t,x)=\int_{U}e^{iay}\, K_V(x,y,t)dy, \ \ \ x\in U,\ \ t >0,
\end{equation}
for $K_V$ the Green function associated with the periodic potential $V$.
 If $\mathscr F$ denotes the topological space
of complex-valued distributions $\mathcal D'(\R_{>0} \times \R,\C)$ equipped with its strong dual topology, a continuous map $a\mapsto \psi_a$ from $\R$ to $\mathscr F$ is said to be real analytic
(according for example to \cite[Definition 1.5.1 \& Proposition 1.5.2]{OrtW13})
if and only if it extends to a neighborhood $\bfU$ of $\R$ in the complex plane as a continuous function $z \in \bfU \rightarrow \psi_z \in \mathscr F$ such that $z \in \bfU \longmapsto \langle \psi_z, \varphi\rangle$ is holomorphic for any test-function $\varphi \in \mathcal D(\R_{>0} \times \R,\C)$.
Such is not the case for the function
$$
a \in \R \longmapsto \sum\limits_{\kappa \in \Z}
(e^{i a t})^{-2\kappa}\, v_{t,\kappa} (e^{ia (\cdot)})\,
e^{-i \kappa^2 t}\, e^{i\kappa x} \in \mathscr F,
$$
hence for the function $a \in \R \longrightarrow \psi_a$ in view of \eqref{sect1-eq13}.
It may then happen, as it is the case for such class of non-constant smooth $2\pi$-periodic real potentials, that the evolution
$(t,x) \mapsto \psi_a(t,x)$ of the initial datum $x \mapsto e^{iax}$ in
\eqref{sect1-eq7} fails to inherit real analyticity in $a$, when analyticity of distribution-valued continuous functions is understood as above.}
\end{example}

\section{Regular $\C$-supershifts, Bernstein polynomials and Kantorovitch examples}\label{sect2new}

It is convenient in this section to re-scale the real line $\R$ through the affine bijective map $\Upsilon : a \mapsto 2a-1$ from $\R$ to itself that maps $[0,1]$ onto $[-1,1]$. The
$(\R,H)$ superoscillating sequences that we consider are those which correspond to {\it regular sampling} on $[-1,1]$, namely
\begin{equation}\label{sect2new-eq1}
H = H^{\boldsymbol \epsilon} : = \big\{h_{N,\nu}^{\boldsymbol \epsilon} =
1 - 2 \Big( \frac{\nu + \epsilon_N (N-\nu)}{N}\Big)\,:\, N \in \N^*,\ 0 \leq \nu \leq N\big\}
\end{equation}
as in \eqref{sect1-eq3} and
\begin{equation}
\label{sect2new-eq2}
C_{H^{\boldsymbol \epsilon},\nu}(N,a): =
C_{H^{\boldsymbol 0},\nu}(N,a) =
\binom{N}{\nu} \Big( \frac{1+a}{2}\Big)^{N-\nu}
\Big(\frac{1-a}{2}\Big)^\nu,\ 0 \leq \nu \leq N.
\end{equation}
Let us first recall why such sequences are indeed superoscillating.

\begin{proposition}\label{sect2new-prop0}
Let $\boldsymbol \epsilon =(\epsilon_N)_{N\geq 1}$, $\epsilon_N\in [0,1[$, such that
$\lim_{N\rightarrow +\infty} \epsilon_N = 0$ and $a\in \R$.
Then, the sequence of entire functions with exponential growth whose $N$-th element is
\begin{equation}\label{sect2new-prop0-eq1}
T_N^{\boldsymbol \epsilon}[a](z)=\exp (i\, \epsilon_N z)\, \Big( \cos
\Big(z\, \frac{1-\epsilon_N}{N}\Big) + i\, a\, \sin \Big(z\, \frac{1-\epsilon_N}{N}\Big)\Big)^N
\end{equation}
converges in ${\rm Exp}(\C)$ towards the entire function $z\longmapsto e^{iaz}$.
Moreover the convergence is uniform with respect to the parameters $a$ and $\boldsymbol \epsilon$ provided
$a$ remains in a compact $K_\R$ of $\R$ and $\boldsymbol \epsilon$ belongs to a family
$\{\boldsymbol \epsilon_{\iota'} = (\epsilon_{\iota',N})_{N\geq 1}\,:\, \iota' \in I'\}$ of sequences $\boldsymbol
\epsilon_{\iota'} \in ([0,1])^{\N^*}$ which all converge towards $0$, such convergence being uniform with respect to $\iota'$.
Then the $(\R,H^{\boldsymbol \epsilon})$-sequence $\{T_N^{\boldsymbol \epsilon}[a]\,:\, a\in \R\}_{N\geq 1}$ is superoscillating.
\end{proposition}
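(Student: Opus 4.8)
The plan is to reduce the whole statement to a single weighted-sup estimate on all of $\C$. Fix a compact $K_\R\subset\R$, set $M:=\sup_{a\in K_\R}|a|$, and put $\delta_N:=\sup_{\iota'}\epsilon_{\iota',N}$, so $\delta_N\to0$. I will produce $R=R(K_\R)>0$ and a sequence $\eta_N\to0$ depending only on $K_\R$ and $(\delta_N)_N$ such that
\[
\sup_{z\in\C}e^{-R|z|}\bigl|T_N^{\boldsymbol\epsilon}[a](z)-e^{iaz}\bigr|\le\eta_N
\qquad\bigl(a\in K_\R,\ \boldsymbol\epsilon\in\{\boldsymbol\epsilon_{\iota'}\}_{\iota'\in I'}\bigr).
\]
Since the space of entire $\Phi$ with $\sup_z e^{-R|z|}|\Phi(z)|<\infty$ is one of the Banach steps of the AU-structure of ${\rm Exp}(\C)$ (see \cite[Definition 4.1.10]{ACSST17A}), this immediately gives convergence of $T_N^{\boldsymbol\epsilon}[a]$ to $z\mapsto e^{iaz}$ in ${\rm Exp}(\C)$, uniformly in the stated parameters. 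To obtain the estimate I will split $\C$ along the circle $|z|=r_0 N$, where $r_0=r_0(M)\in(0,1)$ is fixed below. The one real difficulty is that the naive bound $|g_a(\zeta)^N|\le(1+|a|)^N e^{N|\zeta|}$ (with $g_a(\zeta)=\cos\zeta+ia\sin\zeta$) is exponential in $N$ and hence useless as it stands; the split at $|z|\sim N$ is precisely what neutralizes it.

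On the inner region $\{|z|\le r_0 N\}$ I set $\zeta:=z(1-\epsilon_N)/N$, so that $|\zeta|\le|z|/N\le r_0$ and $N\zeta=z(1-\epsilon_N)$. Since $g_a(0)=1$ and $g_a'(0)=ia$, the function $g_a(\zeta)-e^{ia\zeta}$ vanishes to second order at $0$; a Cauchy estimate then yields, after choosing $r_0$ small in terms of $M$ only, a factorization $g_a(\zeta)=e^{ia\zeta}\bigl(1+\zeta^2\omega_a(\zeta)\bigr)$, uniform for $a\in K_\R$ and $|\zeta|\le r_0$, with $|\zeta^2\omega_a(\zeta)|\le\tfrac12$ and $|\omega_a(\zeta)|\le C_1(M)$. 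Exponentiating gives $g_a(\zeta)^N=e^{iaz(1-\epsilon_N)}\exp\bigl(N\log(1+\zeta^2\omega_a(\zeta))\bigr)$, whose exponent has modulus $\le C_2 N|\zeta|^2\le C_2|z|^2/N$. Writing
\[
T_N^{\boldsymbol\epsilon}[a](z)-e^{iaz}=e^{i\epsilon_N z}\bigl(g_a(\zeta)^N-e^{iaz(1-\epsilon_N)}\bigr)+e^{iaz}\bigl(e^{i\epsilon_N(1-a)z}-1\bigr)
\]
and using $|e^w-1|\le|w|e^{|w|}$, $|e^{i\epsilon_N z}|\le e^{|z|}$, $|e^{iaz}|\le e^{M|z|}$, together with $|z|^2/N\le r_0|z|$ on this region (so, after also requiring $C_2 r_0\le1$, the exponential factors stay below $e^{(2M+2)|z|}$), I expect a bound of the form $\bigl|T_N^{\boldsymbol\epsilon}[a](z)-e^{iaz}\bigr|\le\bigl(C_2|z|^2/N+(1+M)\delta_N|z|\bigr)\,e^{(2M+2)|z|}$.

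On the outer region $\{|z|>r_0 N\}$ I use only $|\cos\zeta|,|\sin\zeta|\le e^{|\zeta|}$, giving $|g_a(\zeta)|\le(1+M)e^{|\zeta|}$, hence $|g_a(\zeta)^N|\le(1+M)^N e^{(1-\epsilon_N)|z|}$; since $N<|z|/r_0$ here, $(1+M)^N\le e^{(\log(1+M)/r_0)|z|}$, so $|T_N^{\boldsymbol\epsilon}[a](z)|\le e^{L|z|}$ with $L=L(K_\R):=2+\log(1+M)/r_0$, and therefore $|T_N^{\boldsymbol\epsilon}[a](z)-e^{iaz}|\le 2e^{\max(L,M)|z|}$. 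Now fix $R:=\max\{2M+3,\ \max(L,M)+1\}$. On the inner region, multiplying the bound above by $e^{-R|z|}$ gives $\le\bigl(C_2|z|^2/N+(1+M)\delta_N|z|\bigr)e^{-|z|}$, whose supremum over $\C$ is $\le 4C_2/(e^2N)+(1+M)\delta_N/e$ by $\sup_{t\ge0}t^2e^{-t}=4e^{-2}$ and $\sup_{t\ge0}te^{-t}=e^{-1}$; on the outer region it is $\le 2e^{-|z|}\le 2e^{-r_0 N}$. Hence the estimate holds with $\eta_N:=\max\{4C_2/(e^2N)+(1+M)\delta_N/e,\ 2e^{-r_0 N}\}\to0$. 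This is exactly where the split earns its keep: on the outer region $(1+M)^N$ is swallowed by $e^{L|z|}$ and then beaten with room to spare, $\sim e^{-r_0 N}$, by the weight $e^{-R|z|}$; while the $|z|^2/N$ error of the inner region is harmless only because, measured in the weighted norm, $\sup_z|z|^2 e^{-c|z|}<\infty$ turns it into an $O(1/N)$ term.

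Finally, restricting to real $z=x$: for any compact $K'\subset\R$ and $(a,x)\in K_\R\times K'$ one has $|T_N^{\boldsymbol\epsilon}[a](x)-e^{iax}|\le e^{R\sup_{K'}|x|}\eta_N\to0$, so $T_N^{\boldsymbol\epsilon}[a](x)\to e^{iax}$ locally uniformly on $\R^2$. Taking $A=\R$, $g_H={\rm Id}_\R$, $C_H\equiv1$, the set $V=\{a:C_H(a)\ne0,\ |g_H(a)|>1\}$ equals $\R\setminus[-1,1]$ and the set $U$ of points $x$ about which $T_N^{\boldsymbol\epsilon}[a](x)\to C_H(a)e^{ig_H(a)x}$ locally uniformly in $(a,x)$ equals $\R$; both are non-empty and open, so by definition the $(\R,H^{\boldsymbol\epsilon})$-sequence $\{T_N^{\boldsymbol\epsilon}[a]:a\in\R\}_{N\ge1}$ is superoscillating, with superoscillating subset $\R\setminus[-1,1]$.
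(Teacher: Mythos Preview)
Your argument is correct and complete. The paper itself does not prove this proposition at all: its proof is a one-line citation of \cite[Lemma 4.1]{CSSY22} (after the harmless substitution $z\mapsto z(1-\epsilon_N)$) together with the remark that the uniformity clauses are ``immediate to check''. By contrast, you give a fully self-contained quantitative argument: the split of $\C$ at $|z|=r_0N$, the second-order factorization $g_a(\zeta)=e^{ia\zeta}(1+\zeta^2\omega_a(\zeta))$ controlled by Cauchy estimates on the inner region, and the crude bound $(1+M)^Ne^{|z|}$ absorbed into $e^{L|z|}$ via $N<|z|/r_0$ on the outer region. This yields an explicit weighted-sup estimate with a single $R=R(K_\R)$ and an explicit null sequence $\eta_N$, which is precisely what convergence in the inductive-limit topology of ${\rm Exp}(\C)$ demands. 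Your approach therefore buys more than the paper's: an explicit rate ($O(1/N)+O(\delta_N)$ in the weighted norm) and transparent uniformity in both $a\in K_\R$ and $\iota'\in I'$, at the cost of a few more lines. Two cosmetic remarks: the phrase ``I expect a bound of the form'' undersells what you actually do, since the subsequent lines justify the bound completely; and in the outer region your constant $L$ could be taken as $1+\log(1+M)/r_0$ rather than $2+\log(1+M)/r_0$, but the slack is harmless.
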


\begin{proof} Such result is known.
We refer for example to \cite[Lemma 4.1]{CSSY22} with $z$ replaced by $z(1-\epsilon_N)$.
The uniformity properties (with respect to $a\in K_\R$) and to
$\boldsymbol \epsilon$ in a family $\{\boldsymbol \epsilon_{\iota'} = (\epsilon_{\iota',N})_{N\geq 1}\,:\, \iota' \in I'\}$ such as described in the statement are also immediate to check.
\end{proof}

\begin{remark}
Let $A$ be an open interval of $\R$ which contains the segment $[-1,1]$. To say that a continuous map $\psi : A
\rightarrow \C$ satisfies the $({\rm SP})_{\C}$ property on $A$
with respect to the superoscillating sequence $\{T_{N}^{\boldsymbol \epsilon}[a]\,:\, a\in \R\}_{N\geq 1}$, see Definition \ref{sect1-def1} is equivalent to say that the sequence of complex valued continuous functions
\begin{equation}\label{sect2new-eq3}
\Big( b \in B \longmapsto \sum\limits_{\nu=0}^N
\binom{N}{\nu} b^\nu (1-b)^{N-\nu}\, \Psi \Big(\nu\, \frac{1-\epsilon_N}{N}\Big)\Big)_{N\geq 1},
\end{equation}
where
\begin{equation}\label{sect2new-eq3bis}
B := \Upsilon^{-1} (A),\ \Psi = \psi \circ \Upsilon,
\end{equation}
converges to $\Psi$ in $\mathcal C(B,\C)$. When $A$ is an interval of $\R$ with length strictly larger than $2$, to say that a continuous map
$\psi : A \rightarrow \C$ satisfies the $({\rm TCSP})_\C$ property on $A$ with respect to the superoscillating sequence $\{T_{N}^{\boldsymbol \epsilon}[a]\,:\, a\in \R\}_{N\geq 1}$, see Definition \ref{sect1-def2},  is equivalent to say that the sequence of complex valued continuous functions
\begin{equation}\label{sect2new-eq4}
\Big(
(b,b') \in \mathbb B \longmapsto \sum\limits_{\nu=0}^N
\binom{N}{\nu} b^\nu (1-b)^{N-\nu}\, \Psi \Big(b' + \nu\, \frac{1-\epsilon_N}{N}\Big)\Big)_{N\geq 1},
\end{equation}
where
\begin{equation}\label{sect2new-eq4bis}
B = \Upsilon^{-1} (A),\ \mathbb B :=
\Upsilon^{-1} (\mathbb A) = \{(b,b')\in \R \times B\, :\, b'+ [0,1] \subset B,\ b+b'\in B\},\ \Psi = \psi \circ \Upsilon,
\end{equation}
converges to $(b,b') \in \mathbb B \longmapsto \Psi (b+b')$ in
$\mathcal C(\mathbb B,\C)$.
\end{remark}

Let us first recall in our context a well known result relating Serge Bernstein's approximation operators with iterated forward discrete differentiation on the real line.

\begin{proposition}\label{sect2new-prop1}
Let $A$ be an interval of $\R$ with length strictly larger than $2$, $\psi\in \mathcal C(A,\C)$, and
let $\mathbb B$, $\Psi$ be defined as in \eqref{sect2new-eq4bis}. For any sequence
$\boldsymbol \epsilon = (\epsilon_N)_{N\geq 1}$ of elements in $[0,1[$ which tends to $0$ when
$N$ tends to infinity one has, for any $(b,b')\in \mathbb B$ and $N\in \N^*$, that
\begin{equation}\label{sect2new-eq5}
\sum\limits_{\nu=0}^N
\binom{N}{\nu} b^\nu (1-b)^{N-\nu}\, \Psi \Big(b' + \nu\, \frac{1-\epsilon_N}{N}\Big) =
\sum\limits_{\kappa =0}^N
\frac{N!}{(N-\kappa)!}
\frac{(\Delta_{(1-\epsilon_N)/N}^+)^\kappa [\Psi]\,  (b')}{\kappa!}\, b^\kappa,
\end{equation}
where
\begin{equation}\label{sect2new-eq6}
\begin{split}
(\Delta_{(1-\epsilon_N)/N}^+)^0 [\Psi] \, (b') & = \Psi(b') \\
(\Delta_{(1-\epsilon_N)/N}^+)^1 [\Psi] \, (b') & = \Psi(b' + (1-\epsilon_N)/N) - \Psi(b') \\
(\Delta_{(1-\epsilon_N)/N}^+)^2 [\Psi] \, (b') & = (\Delta_{1-\epsilon_N/N}^+)^1[\Psi] (b'+ (1-\epsilon_N)/N)
- (\Delta_{(1-\epsilon_N)/N}^+)^1 [\Psi] \, (b') \\
& = \Psi (b'+ 2(1-\epsilon_N)/N) - 2\, \Psi (b'+ (1-\epsilon_N)/N) + \Psi(b')\\
(\Delta_{(1-\epsilon_N)/N}^+)^3[\Psi] \, (b') & = \cdots
\end{split}
\end{equation}
are the successive forward (+) differences of $\Psi$ at $b'$ (with rate of sampling $(1-\epsilon_N)/N$).
\end{proposition}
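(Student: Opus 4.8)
\section*{Proof proposal}

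The plan is to reduce the claimed identity \eqref{sect2new-eq5} to two classical elementary facts: Newton's forward-difference expansion of the shift operator, and the binomial ``absorption'' identity $\binom{N}{\nu}\binom{\nu}{\kappa} = \binom{N}{\kappa}\binom{N-\kappa}{\nu-\kappa}$, after which the whole sum collapses by the binomial theorem. Write $h = (1-\epsilon_N)/N$ and let $E_h$ be the shift operator acting on continuous functions by $E_h[\Psi](x) = \Psi(x+h)$, so that by definition $\Delta_h^+ = E_h - {\rm Id}$. Before manipulating anything, one checks that every evaluation of $\Psi$ below makes sense: since $0 \le \nu \le N$ and $0 < h \le 1/N$, we have $0 \le \nu h \le 1 - \epsilon_N < 1$, hence $b' + \nu h \in b' + [0,1] \subset B$ for every $(b,b') \in \mathbb B$, and the same bound applies to the intermediate points $b' + \kappa h$ with $0 \le \kappa \le N$ that occur inside the iterated forward differences. (This is precisely where the one-sided condition $b' + [0,1] \subset B$ in \eqref{sect2new-eq4bis} is used, matching the ``forward'' direction of $\Delta_h^+$.)

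Next I would expand, using that ${\rm Id}$, $E_h$ and $\Delta_h^+$ commute (they are all polynomials in $E_h$),
$$
E_h^\nu = ({\rm Id} + \Delta_h^+)^\nu = \sum_{\kappa=0}^{\nu} \binom{\nu}{\kappa}\, (\Delta_h^+)^\kappa ,
$$
and apply this to $\Psi$ at $b'$, which gives the exact (finite) identity
$$
\Psi\Big(b' + \nu\, \frac{1-\epsilon_N}{N}\Big) = \sum_{\kappa=0}^{\nu} \binom{\nu}{\kappa}\, (\Delta_{(1-\epsilon_N)/N}^+)^\kappa [\Psi]\, (b'),\qquad 0 \le \nu \le N .
$$
Substituting this into the left-hand side of \eqref{sect2new-eq5} and exchanging the two finite summations (both range over finitely many indices, so this is legitimate), one obtains
$$
\sum_{\kappa=0}^{N} (\Delta_{(1-\epsilon_N)/N}^+)^\kappa [\Psi]\, (b') \, \sum_{\nu=\kappa}^{N} \binom{N}{\nu}\binom{\nu}{\kappa}\, b^\nu (1-b)^{N-\nu} .
$$

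Now I would simplify the inner sum: by the absorption identity and the substitution $j = \nu - \kappa$,
$$
\sum_{\nu=\kappa}^{N} \binom{N}{\nu}\binom{\nu}{\kappa}\, b^\nu (1-b)^{N-\nu}
= \binom{N}{\kappa}\, b^\kappa \sum_{j=0}^{N-\kappa} \binom{N-\kappa}{j}\, b^j (1-b)^{N-\kappa-j}
= \binom{N}{\kappa}\, b^\kappa \,\big(b + (1-b)\big)^{N-\kappa} = \binom{N}{\kappa}\, b^\kappa
$$
by the binomial theorem. Hence the left-hand side of \eqref{sect2new-eq5} equals $\sum_{\kappa=0}^{N} \binom{N}{\kappa}\, (\Delta_{(1-\epsilon_N)/N}^+)^\kappa [\Psi]\, (b')\, b^\kappa$, and rewriting $\binom{N}{\kappa} = \frac{N!}{(N-\kappa)!\,\kappa!}$ gives precisely the right-hand side; specializing $\kappa = 0,1,2,\dots$ and expanding $(\Delta_h^+)^\kappa = (E_h - {\rm Id})^\kappa$ reproduces the explicit low-order formulas displayed in \eqref{sect2new-eq6}.

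There is no substantial obstacle here: the statement is merely the $b'$-parametrized form of the classical fact that the Bernstein operator of degree $N$ acts on a function through its successive forward differences and reproduces polynomials. The only point needing genuine care is the bookkeeping at the very start — verifying that the sampling step $(1-\epsilon_N)/N$ is small enough (namely that $\nu(1-\epsilon_N)/N \le 1$ for all $0 \le \nu \le N$) so that none of the arguments $b' + \nu h$ or the intermediate nodes leave $B$, which is exactly what the definition of $\mathbb B$ guarantees; the summation exchange and the two binomial identities are entirely routine.
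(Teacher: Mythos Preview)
Your proof is correct and is essentially a self-contained derivation of the identity that the paper simply quotes from \cite[Theorem 7.1.1]{Phil03}: the paper reduces to the classical formula $\sum_{\nu=0}^N \binom{N}{\nu} X^\nu(1-X)^{N-\nu} c(\nu) = \sum_{\kappa=0}^N \binom{N}{\kappa}\,\Delta_{1/N}^\kappa[c](0)\,X^\kappa$, then adjusts the step from $1/N$ to $(1-\epsilon_N)/N$ and specializes $c$ and $X$, whereas you reprove that identity directly via Newton's expansion $E_h^\nu = ({\rm Id}+\Delta_h^+)^\nu$ together with the absorption identity and the binomial theorem. The two arguments are the same in substance; yours has the advantage of being explicit and of checking along the way that all sample points $b'+\nu h$ stay in $B$, a point the paper leaves implicit.
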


\begin{proof} When $\boldsymbol \epsilon = \boldsymbol 0$, the result follows from the classical combinatorial identity
$$
\sum\limits_{\nu=0}^N \binom{N}{\nu} X^\nu (1-X)^{N-\nu} \, c(\nu)  =
\sum\limits_{\nu = 0}^N \binom{N}{\kappa}\, \Delta_{1/N}^\kappa [\,c\,] (0)\, X^\kappa,
$$
see for example \cite[Theorem 7.1.1]{Phil03}. Replacing $\boldsymbol 0$ by $\boldsymbol \epsilon$ amounts just to change the rate $1/N$ used to compute right-divided differences to $(1-\epsilon_N)/N$. For each $b'$ such that $b'+[0,1] \subset B$, it remains to take
$$
c\,(\nu\, (1-\epsilon_N)/N) = c_{b'} (\nu\, (1-\epsilon_N)/N)) = \Psi (b'+ \nu\, (1-\epsilon_N)/N).
$$
Formula \eqref{sect2new-eq5} then follows immediately if one evaluates $X$ at $b$.
\end{proof}

We will also need in this section the following parametric version of another important result due to Serge Bernstein \cite[Theorem A]{Bern35}.

\begin{theorem}\label{sect2new-prop2}
Let $c\in [0,1]$, $\rho > \max (c, 1-c)$,
$\{G_\iota\,:\, \iota \in I\}$ be a bounded family in the space
$\mathcal{H}(D(c,\rho))$ of holomorphic functions in $D_\C(c,\rho)$ and let
$\{\boldsymbol \epsilon_{\iota'} = (\epsilon_{\iota',N})_{N\geq 1}\,:\, \iota' \in I'\}$ be a family of sequences $\boldsymbol
\epsilon_{\iota'} \in ([0,1])^{\N^*}$ which all converge to $0$ uniformly with respect to $\iota'$.
Then, for any $\iota \in I$, $\iota'\in I'$, the sequence of polynomial maps
\begin{equation}\label{sect2new-eq7}
\Big( z \in D_\C(c,\rho)
\longmapsto \sum\limits_{\nu=0}^N
\binom{N}{\nu} z^\nu (1-z)^{N-\nu}\, G_\iota \Big(
\nu\, \frac{1-\epsilon_{\iota',N}}{N}\Big)\Big)_{N\geq 1}
\end{equation}
converges to $G_\iota (z)$ in $\mathcal{H}(D_\C(c,\rho))$. Moreover the convergence on every compact subset of $D_\C(c,\rho)$ is uniform with respect to the parameters
$\iota$ and $\iota'$.
\end{theorem}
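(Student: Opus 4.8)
The plan is to derive Theorem~\ref{sect2new-prop2} from its non-parametric, unperturbed form --- which is precisely the classical theorem of Bernstein \cite[Theorem A]{Bern35}: for a single $G\in\mathcal H(D_\C(c,\rho))$ with $\rho>\max(c,1-c)$, the Bernstein polynomials $B_N[G](z):=\sum_{\nu=0}^N\binom N\nu z^\nu(1-z)^{N-\nu}G(\nu/N)$ converge to $G$ in $\mathcal H(D_\C(c,\rho))$ --- by first absorbing the perturbation $\boldsymbol\epsilon_{\iota'}$ into a dilation and then handling the families $\{G_\iota\}$, $\{\boldsymbol\epsilon_{\iota'}\}$ by soft arguments.

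First I would reduce the perturbation to a rescaling. For $\epsilon\in[0,1]$ set $G^{[\epsilon]}(u):=G((1-\epsilon)u)$. Since $c\le\rho$ one has $|(1-\epsilon)u-c|\le(1-\epsilon)|u-c|+\epsilon|c|\le\max(|u-c|,|c|)$, so a disk $D_\C(c,\rho)$ containing $0$ satisfies $(1-\epsilon)\,D_\C(c,\rho)\subset D_\C(c,\rho)$; hence $G^{[\epsilon]}$ again lies in $\mathcal H(D_\C(c,\rho))$, with $\sup_{\overline{D_\C(c,s)}}|G^{[\epsilon]}|\le\sup_{\overline{D_\C(c,\max(s,|c|))}}|G|$ for every $s<\rho$. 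As $G_\iota\big(\nu(1-\epsilon_{\iota',N})/N\big)=G_\iota^{[\epsilon_{\iota',N}]}(\nu/N)$, the polynomial in \eqref{sect2new-eq7} is exactly $B_N\big[G_\iota^{[\epsilon_{\iota',N}]}\big](z)$. Moreover, estimating $G^{[\epsilon]}-G$ along the segment from $(1-\epsilon)u$ to $u$ --- which stays in the convex set $\overline{D_\C(c,\max(|u-c|,|c|))}$ --- via the mean value inequality and a Cauchy estimate gives: for every compact $L\Subset D_\C(c,\rho)$ and every $s'$ with $\max\big(|c|,\sup_{u\in L}|u-c|\big)\le s'<\rho$,
\[
\sup_{u\in L}\big|G^{[\epsilon]}(u)-G(u)\big|\ \le\ \epsilon\,\Big(\sup_{u\in L}|u|\Big)\,\sup_{\overline{D_\C(c,s')}}|G'|.
\]

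Next I would pass from one function to bounded families and conclude. If \cite[Theorem A]{Bern35} is used in its quantitative form (geometric rate $\sup_L|B_N[H]-H|\le C_{L,s}\,\theta^N\sup_{\overline{D_\C(c,s)}}|H|$ with $\theta=\theta(L,s)\in(0,1)$), the families version is immediate; alternatively, using only the qualitative statement, one notes that the operators $B_N$ on the Fréchet--Montel space $\mathcal H(D_\C(c,\rho))$ converge pointwise to the identity, hence $(B_N[H])_N$ is bounded for each $H$, hence $(B_N)_N$ is equicontinuous by the Banach--Steinhaus theorem, and equicontinuity plus pointwise convergence yield $\sup_{H\in\mathcal K}\sup_L|B_N[H]-H|\to0$ for every compact $L\Subset D_\C(c,\rho)$ and every compact $\mathcal K\subset\mathcal H(D_\C(c,\rho))$. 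Either way, apply this to $\mathcal K:=\overline{\{G_\iota^{[\epsilon]}:\iota\in I,\ \epsilon\in[0,1]\}}$, which is bounded by the hypothesis on $\{G_\iota\}$ together with the first step, hence relatively compact by Montel's theorem. Then, for $L$ and $s'$ as above,
\[
\sup_{u\in L}\Big|B_N\big[G_\iota^{[\epsilon_{\iota',N}]}\big](u)-G_\iota(u)\Big|\ \le\ \sup_{H\in\mathcal K}\sup_{L}\big|B_N[H]-H\big|\ +\ \Big(\sup_{\iota'}\epsilon_{\iota',N}\Big)\Big(\sup_{u\in L}|u|\Big)\sup_{\iota}\,\sup_{\overline{D_\C(c,s')}}|G_\iota'|,
\]
and both terms tend to $0$: the first by the preceding, the second because $\sup_{\iota'}\epsilon_{\iota',N}\to0$ while the remaining factors are finite (by compactness of $L$, boundedness of $\{G_\iota\}$ and Cauchy's inequality) and independent of $N,\iota,\iota'$. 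This is precisely the asserted convergence, uniform in $\iota$ and $\iota'$.

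The main obstacle is conceptual rather than computational. A crude estimate of $B_N[H](z)$ away from $[0,1]$ only gives $\sup_{[0,1]}|H|\cdot(|z|+|1-z|)^N$, which is exponentially large for non-real $z$; the whole strength of \cite[Theorem A]{Bern35} is that this blow-up does not occur for holomorphic data, and the task here is only to check that neither the dilation $u\mapsto(1-\epsilon)u$ --- which must keep the data inside $D_\C(c,\rho)$ and uniformly bounded there for every $\epsilon\in[0,1]$, the point where the inequality $c\le\rho$ enters --- nor the passage to bounded families reintroduces it. A fully self-contained proof could instead expand each $G_\iota$ near the relevant points, rewrite \eqref{sect2new-eq7} through the forward divided differences of Proposition~\ref{sect2new-prop1}, and estimate those by Cauchy's formula; but bounding the products $\prod_{\ell}\big(w-\ell(1-\epsilon_N)/N\big)$ sharply enough amounts, in effect, to reproving Bernstein's theorem, so I would not take that route.
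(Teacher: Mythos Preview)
Your proof is correct and takes a genuinely different route from the paper's. The paper essentially reproves Bernstein's theorem in the parametric setting: it expands each $G_\iota$ in a Taylor series about $c$, introduces the generating function of the Bernstein ``moments'' $B_{N,\kappa}^{\boldsymbol\epsilon}(z)=\sum_\nu\binom{N}{\nu}\big((1-\epsilon_N)\nu/N-c\big)^\kappa z^\nu(1-z)^{N-\nu}$, bounds these coefficients via Lorentz's inequality $|z(1-c)^\kappa+(1-z)(-c)^\kappa|\le|z-c|^\kappa$ (valid for $|z-c|\ge\min(c,1-c)$), deduces uniform boundedness of the polynomial family on $\overline{D_\C(c,\rho_0)}$, and then concludes by the Vitali--Porter theorem after observing uniform convergence on $[0,1]$ from the classical Bernstein approximation with a common modulus of continuity. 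Your argument instead treats Bernstein's theorem as a black box: you absorb the $\epsilon$-perturbation into the holomorphic data via the dilation $G^{[\epsilon]}(u)=G((1-\epsilon)u)$, and handle the two families through Montel's theorem and Banach--Steinhaus (equicontinuity of the operators $B_N$ plus pointwise convergence forces uniform convergence on compacts of $\mathcal H$). The paper's proof is more self-contained and yields explicit uniform bounds on the polynomials themselves; yours is shorter and more modular, cleanly decoupling the perturbation issue from the uniformity-in-families issue, at the price of invoking soft functional-analytic machinery. Both rely, ultimately, on the same analytic core --- the Lorentz inequality hidden inside \cite[Theorem~A]{Bern35} --- but you avoid unpacking it.
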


\begin{proof}
We follow
the proof presented in \cite[Theorem 4.1.1]{Lor53}, slightly adapting it in order to incorporate the parametric setting. Let $\rho_0 < \rho$ and, for each $\iota \in I$,  let $G_\iota (z) = \sum_{\kappa = 0}^\infty \gamma_{\iota,\kappa} (z-c)^\kappa$, with $\gamma_{\iota,\kappa} \in \C$ the Taylor coefficients of $G_\iota$ about $c$. It follows from the fact that the family $G = \{G_\iota\,:\, \iota \in I\}$ is bounded in
$\mathcal{H}(D_\C(c,\rho))$ that there is a positive constant $M = M_G(\rho_0)$ such that
\begin{equation}\label{prop2-2-eq1}
\forall\, \iota \in I,\ \sum_{\kappa =0}^\infty |\gamma_{\iota,\kappa}|\, \rho_0^\kappa \leq M.
\end{equation}
For each $\boldsymbol \epsilon \in
[0,1[^{\N^*}$ which tends to $0$ when $N$ tends to infinity,
for each $N\in \N^*$ and $\kappa \in \N$, let
$$
B_{N,\kappa}^{\boldsymbol \epsilon} (X) = \sum\limits_{\nu=0}^N \binom{N}{\nu}
\Big( \frac{1-\epsilon_N}{N} \nu -c\Big)^\kappa\,  X^\nu (1-X)^{N-\nu} \in \R[X].
$$
Let us introduce the (for the time being formal) generating function of the sequence of polynomial maps
$(B_{N,\kappa}^{\boldsymbol \epsilon}(z))_{\kappa \geq 0}$, namely
\begin{multline}\label{prop2-2-eq2}
(u,z) \longmapsto \sum\limits_{\kappa =0}^\infty B_{N,\kappa}^{\boldsymbol \epsilon} (z)\,
\frac{u^\kappa}{\kappa!} =
\sum\limits_{\nu=0}^N
\binom{N}{\nu}
z^\nu (1-z)^{N-\nu} \Big(\sum\limits_{\kappa = 0}^\infty
\Big(
\frac{1- \epsilon_N}{N} \, \nu -c\Big)^\kappa
\frac{u^\kappa}{\kappa!}\Big) \\
= \sum\limits_{\nu=0}^N
\binom{N}{\nu}
z^\nu (1-z)^{N-\nu} \Big(\exp
\Big( (1-c)\, \frac{1-\epsilon_N}{N}\Big)\Big)^\nu
\Big( \exp \Big( - c \, \frac{1-\epsilon_N}{N}\, u\Big)\Big)^{N-\nu} \\
= \Big( z \, \exp \Big( (1- c)\, \frac{1-\epsilon_N}{N}\, u\Big) +
(1-z)\, \exp \Big( - c\, \frac{1-\epsilon_N}{N}\, u\Big)\Big)^N \\
= \Big( \sum_{\kappa = 0}^\infty
\big( z (1-c)^\kappa + (1-z) (-c)^\kappa\big)
\, \Big( \frac{1-\epsilon_N}{N}\Big)^\kappa\, \frac{u^\kappa}{\kappa!}\Big)^N.
\end{multline}
As proved in \cite[(6), page 89]{Lor53}, we have  that
\begin{equation}\label{prop2-2-eq3}
\big| z (1-c)^\kappa + (1-z) (-c)^\kappa\big| \leq |z-c|^\kappa,
\end{equation}
for any $z$ such that $|z-c|\geq \min (c,1-c)$ and for any $\kappa \in \N$.
Therefore, for $z$ such that $|z-c| \geq \min (c,1-c)$, the coefficients of the (formal) power series in
$u$ (raised to the power $N$ in the last line of \eqref{prop2-2-eq2}) are bounded from above in absolute value by those of
$$
u \longmapsto \exp \Big( \rho_0 \, \frac{1-\epsilon_N}{N}\, u\Big).
$$
The moduli of the coefficients in the formal expansion in the powers $u^\kappa$ of
$$
(z,u) \longmapsto \sum\limits_{\kappa =0}^\infty B_{N,\kappa}^{\boldsymbol \epsilon} (z)\,
\frac{u^\kappa}{\kappa!}
$$
do not then exceed those of the corresponding coefficients in the expansion in the powers
$u^\kappa$ of
$$
u \longmapsto \exp \big( \rho_0 \, (1-\epsilon_N)\, u\big).
$$
For any $\iota \in I$, $\iota'\in I'$, $N\in \N^*$ and $z\in \overline{D_\C(c,\rho_0)}$, one has
\begin{multline*}
\sum\limits_{\nu=0}^N
\binom{N}{\nu} z^\nu (1-z)^{N-\nu}
G_\iota \Big( \nu \, \frac{1-\epsilon_{\iota',N}}{N}\Big) =
\sum\limits_{\nu=0}^N
\binom{N}{\nu} z^\nu (1-z)^{N-\nu}
\Big( \sum\limits_{\kappa =0}^\infty
\gamma_{\iota,\kappa}\, \Big(
\nu\, \frac{1-\epsilon_{\iota',N}}{N} - c\Big)^\kappa\Big) \\
= \sum\limits_{\kappa =0}^\infty
\gamma_{\iota,\kappa}
\, \Big(\sum\limits_{\nu=0}^N \binom{N}{\nu}\,
\Big(\nu\, \frac{1-\epsilon_{\iota',N}}{N} - c\Big)^{\kappa}
z^\nu (1-z)^{N-\nu}\Big).
\end{multline*}
Since
$$
\Big|\sum\limits_{\nu=0}^N \binom{N}{\nu}\,
\Big(\nu\, \frac{1-\epsilon_{\iota',N}}{N} - c\Big)^{\kappa}
z^\nu (1-z)^{N-\nu}\Big| = |B_{N,k}^{\boldsymbol \epsilon_\iota}(z)| \leq (\rho_0 (1-\epsilon_N))^k ,
$$
according to the reasoning above, the sequence of polynomial functions
$$
\Big(\sum\limits_{\nu=0}^N
\binom{N}{\nu} z^\nu (1-z)^{N-\nu}
G_\iota \Big( \nu \, \frac{1-\epsilon_{\iota',N}}{N}\Big)\Big)_{N\geq 1}
$$
is uniformly bounded in $D_\C(c,\rho)$ with respect both to
$\iota \in I$ and $\iota'\in I'$. Since the family
$\{G_\iota\,;\, \iota \in I\}$ is bounded in $\mathcal{H}(V)$, where $V$ is any open neighborhood of
$[0,1]$, the family $\{(G_{\iota})_{|[0,1]}\,:\, \iota \in I\}$ admits a uniform modulus of continuity
$[0,1]$ Classical facts about Bernstein's approximation of continuous functions on $[0,1]$, see for example \cite[\textsection 1.6]{Lor53}, ensure that for any $\iota \in I$ and
$\iota'\in I'$, the sequence of functions
$$
\Big(\sum\limits_{\nu=0}^N
\binom{N}{\nu} z^\nu (1-z)^{N-\nu}
G_\iota \Big( \nu \, \frac{1-\epsilon_{\iota',N}}{N}\Big)\Big)_{N\geq 1}
$$
converges in $\mathcal C([0,1],\C)$ to $z\in [0,1] \longmapsto G_\iota (z)$, such convergence being uniform with respect to the choice of $\iota \in I$ or $\iota'\in I'$ as soon as the convergence of $(\epsilon_{\iota',N})_{N\geq 1}$ towards $0$ is uniform with respect to the index $\iota'$.
Since all points in $[0,1]$ as a subset of $D_\C(c,\rho)$ are accumulation points, the assertion of the proposition follows then from Vitali-Porter theorem, see
\cite[p. 68]{Titch39} or also \cite[\textsection 2.4]{Schiff93}.
\end{proof}

Theorem \ref{sect2new-prop2} implies the following result.

\begin{theorem}\label{sect2new-thm1} Let $A$ be an open interval of $\R$ with length $R$ strictly larger than $2$ and let
$F$ be either an entire function when $A$ is unbounded, or a holomorphic function on
\begin{equation}\label{sect2new-eq7bis}
W_A = \bigcup_{\{a'\in A\,:\, a'+[-1,1]\subset A\}}
\overline{D_{\C}\big(a' + C(a'), {\rm dist} \big(a'+ C(a'),\R \setminus A\big)\big)},
\end{equation}
where
\begin{equation}\label{sect2new-eq7ter}
C(a') = 2\, \Big(\frac{a'-1 - \inf A}{R-2}-1\Big)  \in ]-1,1[
\end{equation}
when $A$ is bounded. Then $\psi= F_{|A}$ satisfies on $A$ the $({\rm TCSP})_\C$ property
with respect to the superoscillating sequence $\{T_{N}^{\boldsymbol \epsilon}[a]\,:\, a\in \R\}_{N\geq 1}$, where $\boldsymbol \epsilon$ is any sequence
of elements in $[0,1[$ which tends to $0$. Moreover, given a family of sequences $\boldsymbol
\epsilon_{\iota'} \in ([0,1])^{\N^*}$, $\iota'\in I'$, which all converge to $0$ uniformly with respect to $\iota'$ on any compact subset of the
all sequences of functions
$$
\Big(
(a,a') \in \mathbb A \longmapsto
\sum\limits_{\nu=0}^N
\binom{N}{\nu}
\Big(\frac{1+a}{2}\Big)^{N-\nu}
\Big(\frac{1-a}{2}\Big)^\nu \, \psi(a' + h_{N,\nu}^{\boldsymbol \epsilon_{\iota'}})\Big)_{N\geq 1}
$$
to $(a,a') \in \mathbb A \longmapsto \psi (a+a')$ is also uniform with respect to the index $\iota'$.
\end{theorem}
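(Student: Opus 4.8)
The plan is to derive the statement from Theorem~\ref{sect2new-prop2} after the affine rescaling by $\Upsilon$. By the Remark following Proposition~\ref{sect2new-prop0} (in particular \eqref{sect2new-eq4}--\eqref{sect2new-eq4bis}), the assertion that $\psi = F_{|A}$ satisfies $({\rm TCSP})_\C$ on $A$ with respect to $\{T_N^{\boldsymbol\epsilon}[a]\}_{N\ge 1}$ is equivalent to the following: with $B = \Upsilon^{-1}(A)$, $\Psi = \psi\circ\Upsilon = (F\circ\Upsilon)_{|B}$ and $\mathbb B$ as in \eqref{sect2new-eq4bis}, the sequence of functions
\[
(b,b')\in\mathbb B\longmapsto \sum_{\nu=0}^N\binom N\nu b^\nu(1-b)^{N-\nu}\,\Psi\Big(b'+\nu\,\tfrac{1-\epsilon_N}{N}\Big)
\]
converges to $(b,b')\mapsto\Psi(b+b')$ in $\mathcal C(\mathbb B,\C)$, i.e.\ locally uniformly on $\mathbb B$; and the ``moreover'' clause is the same convergence, on compacts of $\mathbb B$, being uniform in the index $\iota'$ once $\epsilon_N$ is replaced by $\epsilon_{\iota',N}$. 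It therefore suffices to fix a compact $L\subset\mathbb B$, produce uniform convergence (uniform also in $\iota'$) on a neighbourhood of each point of $L$, and patch by compactness of $L$.

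Fix $(b_0,b_0')\in\mathbb B$ and, for $b'$ near $b_0'$, set $G_{b'}(z):=\Psi(b'+z)=F\big(\Upsilon(b')+2z\big)$, so that $G_{b'}(b)=\Psi(b+b')$ and the sum above is precisely the polynomial of Theorem~\ref{sect2new-prop2} built from $G_{b'}$, evaluated at $z=b$. The step I would carry out first is the geometric claim: there exist $c\in[0,1]$ and $\rho>\max(c,1-c)$ with $b_0\in D_\C(c,\rho)$ and $\overline{D_\C(\Upsilon(b_0')+2c,\,2\rho)}\subseteq W_A$ (when $A$ is unbounded this is automatic, $F$ being entire: take any $c\in[0,1]$ and $\rho$ large enough to contain $b_0$). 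This is exactly where the shape of $W_A$ enters. Writing $a':=2b_0'$ (a valid translation parameter, $a'+[-1,1]\subset A$) and $\beta:=\Upsilon(b_0+b_0')\in A$, pushing a disk $D_\C(c,\rho)$ as above forward by $z\mapsto\Upsilon(b')+2z$ yields a disk with real centre in $[a'-1,a'+1]$, containing the segment $[a'-1,a'+1]$ (this corresponds to $\rho>\max(c,1-c)$) and the point $\beta$ (this corresponds to $b_0\in D_\C(c,\rho)$), and conversely; so one must inscribe in $W_A$, for every valid $a'$ and every target $\beta\in A$, a real-centred disk centred in $[a'-1,a'+1]$ and containing $[a'-1,a'+1]\cup\{\beta\}$. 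The centre $a'+C(a')$ and radius ${\rm dist}(a'+C(a'),\R\setminus A)$ appearing in \eqref{sect2new-eq7bis}--\eqref{sect2new-eq7ter} are, by design, choices that make this possible; verifying it runs through the three positions of $\beta$ relative to $[a'-1,a'+1]$ (equivalently $b_0\in[0,1]$, $b_0>1$, $b_0<0$), in each case placing $c$ at or near the endpoint of $[0,1]$ closest to $b_0$.

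Granting the geometric claim, fix the resulting $c,\rho$. Since $\overline{D_\C(\Upsilon(b_0')+2c,2\rho)}$ is a compact subset of the open domain of $F$, there is a neighbourhood $V'$ of $b_0'$ such that $G_{b'}$ is holomorphic on $D_\C(c,\rho)$ for all $b'\in\overline{V'}$ and $\{G_{b'}:b'\in\overline{V'}\}$ is uniformly bounded on $D_\C(c,\rho)$, hence a bounded family in $\mathcal H(D_\C(c,\rho))$. Choosing also a relatively compact neighbourhood $V$ of $b_0$ with $\overline V\subset D_\C(c,\rho)$, so that $V\times V'$ is a neighbourhood of $(b_0,b_0')$ in $\mathbb B$, I would apply Theorem~\ref{sect2new-prop2} with this $c$, this $\rho$, the bounded family $b'\mapsto G_{b'}$, and the given family $\{\boldsymbol\epsilon_{\iota'}\}_{\iota'\in I'}$: the polynomials $z\mapsto\sum_\nu\binom N\nu z^\nu(1-z)^{N-\nu}G_{b'}\!\big(\nu\tfrac{1-\epsilon_{\iota',N}}{N}\big)$ converge to $G_{b'}(z)=\Psi(b'+z)$ in $\mathcal H(D_\C(c,\rho))$, uniformly on the compact $\overline V$ with respect to both $b'\in\overline{V'}$ and $\iota'\in I'$. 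Evaluating at $z=b$ and using $G_{b'}(b)=\Psi(b+b')$ gives uniform convergence of the rescaled $({\rm TCSP})_\C$ sums to $\Psi(b+b')$ on $V\times V'$, uniformly in $\iota'$. Finitely many such neighbourhoods cover $L$, and since $L\subset\mathbb B$ was arbitrary, the theorem follows.

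The hard part is the geometric claim of the second paragraph — inscribing in $W_A$, at each point of $\mathbb B$, the disk on which Theorem~\ref{sect2new-prop2} can be invoked; this is precisely what forces the particular description \eqref{sect2new-eq7bis}--\eqref{sect2new-eq7ter} of $W_A$, and where a short case analysis on the location of $b_0$ relative to $[0,1]$ is unavoidable. Everything else is a routine repackaging of Theorem~\ref{sect2new-prop2}, together with an openness/continuity argument for the parameter neighbourhood $V'$ and a compactness patching; in particular the uniformity with respect to $\iota'$ is inherited verbatim from the uniformity already built into Theorem~\ref{sect2new-prop2}.
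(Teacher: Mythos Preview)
Your approach is essentially that of the paper: rescale via $\Upsilon$, reduce the $({\rm TCSP})_\C$ assertion to the parametric Bernstein theorem (Theorem~\ref{sect2new-prop2}) applied to the bounded family $\{G_{b'}(z)=\Psi(b'+z)\}$ (with $b'$ playing the role of the index $\iota$), and let the geometry of $W_A$ supply the disk $D_\C(c,\rho)$ on which Theorem~\ref{sect2new-prop2} is invoked. The paper's proof simply takes $c=c(b')$ and $\rho=\rho_0(b')$ directly from the description of $W_B=\Upsilon^{-1}(W_A)$ (rather than the slightly more flexible choice you outline), and handles the unbounded case by exhausting $A$ with bounded subintervals instead of invoking entirety directly, but these are organisational differences, not substantive ones.
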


\begin{proof} Consider first the case where $A$ is a bounded open interval
with length $R>2$. Then $B= \Upsilon^{-1} (A) = ]b_{\rm min}, b_{\rm max}[$ with
$b_{\rm max}-b_{\rm min} = R/2>1$. Let $V$ be an open neighborhood of
$$
W_{B} =
\bigcup_{b_{\rm min}< b' < b_{\rm max} -1}
\overline{D_{\C}\big(b'+c(b'), \rho_0(b')}\big)
$$
where
\begin{equation}\label{sect2new-eq8}
c(b') = 1 - \frac{b'- b_{\rm min}}{R/2-1} \in ]0,1[,\quad
\rho_0(b') = {\rm dist} \big(b'+ c (b'), \R \setminus \Upsilon^{-1} (A)\big),
\end{equation}
so that
$$
\lim_{b' \searrow b_{\rm min}}
c (b') = 1,\quad \lim_{b' \nearrow b_{\rm max}}c (b') = 0,\
\lim_{b'\searrow b_{\rm min}} \rho_0(b') =
\lim_{b'\nearrow b_{\rm min}} \rho_0(b') = \max (1, R/2 -1).
$$
Suppose that $\Psi = \psi\circ \Upsilon$ is the restriction to
the open interval $\Upsilon^{-1} (A)$ of a holomorphic function in $V$,
which amounts to say that $\psi$ is the restriction to $A$ of a holomorphic function
in the open subset $2 V -1$ of the complex plane.
This implies that for any $b'\in ]b_{\rm min}, b_{\rm max}-1[$, there exists a holomorphic function
$G_{b'}$ in an open neighborhood $D_\C (c(b'),\rho(b'))$ of $\overline {D_\C(c(b'), \rho_0(b'))}$ such that $G_{b'}= \Psi(b'+\cdot)$ in an open neighborhood of $[0,1]$. It follows then from
Theorem \ref{sect2new-prop2} with
$$
\{G_\iota\,:\, \iota \in I\}
= \{z\mapsto G_{b'\pm \eta} (z) \,:\, 0\leq \eta \leq \eta (b')\},
$$
and from the definition of $\rho_0(b')< \rho(b')$
that, given $\{\boldsymbol \epsilon_{\iota'} = (\epsilon_{\iota',N})_{N\geq 1}\,:\, \iota' \in I'\}$ a  family of sequences $\boldsymbol
\epsilon_{\iota'} \in ([0,1])^{\N^*}$ which converge towards $0$ uniformly with respect to $\iota'$,
the sequence of polynomial functions
\begin{multline*}
\Big(b \in ]b_{\rm min}-b',b_{\rm max}-b'[ \
\longmapsto \sum\limits_{\nu =0}^N
\binom{N}{\nu}\, b^\nu (1-b)^{N-\nu} G_{b'\pm \eta} \Big( \nu \frac{1-\epsilon_{\iota',N}}{N}\Big)\Big)_{N\geq 1} \\
=  \Big(b \in ]b_{\rm min} - b', b_{\rm max} -b'[ \ \longmapsto \sum\limits_{\nu =0}^N
\binom{N}{\nu}\, b^\nu (1-b)^{N-\nu} \Psi \Big(b' \pm \eta +  \nu \frac{1-\epsilon_{\iota',N}}{N}\Big)\Big)_{N\geq 1}
\end{multline*}
converges uniformly on any compact subset of in $\Upsilon^{-1} (A) - b'$ towards
$G_{b'\pm \eta} (b) = \Psi (b+b')$,
the convergence being uniform with respect both to $\pm \eta$ such that $0\leq \eta
\leq \eta (b')$ and to $\iota'$.
Since
\begin{multline*}
2 W_B -1 = \bigcup_{\{a'\in A\,;\, a'+ [-1,1] \subset A\}}
\overline {D_\C \Big( (a'-1) + 2 \Big( \frac{a'-1 - \inf A}{R-2}\Big), 2\rho_0 \Big(
\frac{a'}{2}\Big)\Big)} \\
= \bigcup_{\{a'\in A\,;\, a'+ [-1,1] \subset A\}}
\overline {D_\C \big( a'+C(a'),
{\rm dist}
\, \big( a'+C(a'), \R \setminus A\big)
\big)} = W_A
\end{multline*}
according to the definition of $a'\mapsto C(a')$, Theorem \ref{sect2new-thm1} follows in the case where $A$ is bounded.
When $A$ is unbounded, one may exhaust $A$ with an increasing sequence of bounded intervals $(A_k)_{k\geq 1}$ whose sequence of lengths $(R_k)_{k\geq 1}$ tends to infinity.
Theorem \ref{sect2new-thm1} in the unbounded case follows then immediately from Theorem \ref{sect2new-thm1} in the bounded case since testing $({\rm TCSP})_\C$
with respect to $(\R,H^{\boldsymbol \epsilon})$
on $A$ amounts clearly to test such property on each
$A_k$.
\end{proof}

\begin{remark}\label{sect2new-rem1}
When $A$ is bounded with length $R>2$, one has
\begin{equation}\label{sect2new-eq9}
W_A \subset \{z\in \C\,:\, {\rm dist}(z,\overline A) \leq \max (2, R-2)\}.
\end{equation}
Any holomorphic function in an open neighborhood of $\{z\in \C\,:\, {\rm dist}(z,\overline A) \leq \max (2, R-2)\}$ is then such that its restriction to $A$ inherits the $({\rm TCSP})_\C$ property
with respect to the superoscillating sequence $\{T_{N}^{\boldsymbol \epsilon}[a]\,:\, a\in \R\}_{N\geq 1}$, where $\boldsymbol \epsilon \in [0,1[^{\N^*}$ tends to
$0$ when $N$ tends to infinity (with the uniformity clause with respect to the choice of such sequence $\boldsymbol \epsilon$).
\end{remark}

Theorem \ref{sect2new-thm1} suggests the introduction of the following definition.

\begin{definition}\label{sect2new-def1} (Regular $\C$-supershift)
Let $A$ be an open interval of $\R$ with length $R$ strictly larger than $2$. A continuous map
$\psi: A \rightarrow \C$ is called a {\it regular $\C$-supershift} if the following two conditions are fulfilled.
\begin{enumerate}
\item The map $\psi$ satisfies the $({\rm TCSP})_\C$ property on $A$ with respect to any superoscillating sequence $\{T_{N}^{\boldsymbol \epsilon}[a]\,:\, a\in \R\}_{N\geq 1}$, as in
\eqref{sect1-eq3} and \eqref{sect1-eq4}, where $\boldsymbol \epsilon$ is any sequence in
$[0,1[^{\N^*}$ which tends to $0$ when $N$ tends to infinity.
\item Given a family $\{(\epsilon_{\iota',N})_{N\geq 1}\,:\, \iota \in I'\}$  of such sequences, such that the convergence of all sequences $\boldsymbol \epsilon_{\iota'}$ towards $0$ is uniform with respect to the index $\iota'$, the convergence  for $(a,a') \in \mathbb A$ of the polynomial functions
\begin{multline}\label{sect2new-eq10}
 \sum\limits_{\nu=0}^{N}
\binom{N}{\nu}
\Big(\frac{1+a}{2}\Big)^{N-\nu}
\Big(\frac{1-a}{2}\Big)^\nu
\psi\Big( a' +
\Big(1 - 2\, \Big(\frac{\nu + \epsilon_{\iota',N}\, (N-\nu)}{N}\Big)\Big)\Big),\quad N=1,2,...
\end{multline}
to $\psi(a+a')$ in $\mathcal C_c(\mathbb A,\C)$ is uniform with respect to the index $\iota'$.
\end{enumerate}
\end{definition}

It follows from from Theorem \ref{sect2new-thm1} that such theorem that real analytic functions, provided they can be extended as holomorphic functions in sufficiently large complex domains containing $A$, provide examples of regular supershifts in $A$. Such is the case for example for restrictions to $A$ of entire functions in case $A$ is unbounded. The natural question is to ask whether  regular $\C$-supershifts inherit some analyticity property. We will deduce from a result of Leonid Kantorovich \cite{Kanto31} that the answer to such question is NO in the case of bounded intervals $A$. \\

The following theorem follows from a result in \cite{Kanto31}, see also
\cite[Theorem 4.1.3 \& subsequent remark]{Lor53}. We modified slightly the construction here in order to formulate a parametric version, as we did with Bernstein's theorem
in Theorem \ref{sect2new-prop2}.

\begin{theorem}\label{sect2new-prop3} Let $G^-$ and $G^+$ two entire functions such that
\begin{equation}\label{sect2new-eq11}
G^{-} (1/2) = G^+(1/2),\quad \frac{dG^-}{dz}(1/2) \not= \frac{dG^+}{dz}(1/2)
\end{equation}
and let $g^{\pm} : b \in \R \rightarrow \C$ be the continuous map defined by
\begin{equation}\label{sect2new-eq12}
g^{\pm} (b) = \begin{cases} G^-(b)\ & {\rm if}\ b < 1/2 \\
G^{+} (b)\ & {\rm if}\ b \geq 1/2.
\end{cases}
\end{equation}
Let $0 < \eta < 1/2$. There exists a neighborhood $V^-_{\eta}$ of $0$ in $\C$ and a neighborhood
$V^+_{\eta}$ of $1$ in $\C$ such that, for any $b'\in [-\eta,\eta]$,
for any $\boldsymbol \epsilon \in [0,1[^{\N^*}$ which tends to $0$ at infinity,
the sequence of polynomial maps
\begin{equation}\label{sect2new-eq12A}
\Big(z \longmapsto \sum\limits_{\nu=0}^N
\binom{N}{\nu} z^\nu (1-z)^{N-\nu} g^{\pm} \Big(b' + \nu \, \frac{1-\epsilon_N}{N}\Big)\Big)_{N\geq 1}
\end{equation}
converges uniformly on any compact subset of $V_\eta^-$ to $z\longmapsto G^- (z+b')$ and uniformly on any compact subset of $V^+_\eta$ to $z\longmapsto G^+ (z+b')$. The convergence is uniform with respect to the parameter $b'\in [-\eta,\eta]$. Moreover, if we consider a family
$\{(\epsilon_{\iota',N})_{N\geq 1}\,:\, \iota'\in I'\}$, where all sequences $\boldsymbol \epsilon_{\iota'}$ tend to $0$ at infinity uniformly with respect to $\iota'$, the above convergences
with $\boldsymbol \epsilon_{\iota'}$ instead of $\boldsymbol \epsilon$, respectively on compact subsets of $V_{\eta}^-$ and $V_\eta^+$, are also uniform with respect to the index $\iota'$.
\end{theorem}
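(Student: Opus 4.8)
\section*{Proof proposal}

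The plan is to reduce everything to the parametric Bernstein theorem (Theorem~\ref{sect2new-prop2}) by writing $g^\pm$ as an entire function plus a correction term that is carried by only one half of the sampling grid and is therefore exponentially negligible near the opposite endpoint. So the two target neighborhoods $V_\eta^-$ and $V_\eta^+$ will simply be small disks $D_\C(0,r)$ and $D_\C(1,r)$, with $r=r(\eta)$.

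First I would set $D:=G^+-G^-$, which by \eqref{sect2new-eq11} is entire with $D(1/2)=0$, and record the two exact identities $g^\pm(t)=G^-(t)+D(t)\,\mathbf 1_{\{t\ge 1/2\}}=G^+(t)-D(t)\,\mathbf 1_{\{t<1/2\}}$ for $t\in\R$. Fixing a sequence $\boldsymbol\epsilon$ as in the statement and $b'\in[-\eta,\eta]$, write $\mathcal B_N[f](z):=\sum_{\nu=0}^N\binom{N}{\nu}z^\nu(1-z)^{N-\nu}f\big(b'+\nu(1-\epsilon_N)/N\big)$ for the polynomial in \eqref{sect2new-eq12A}, and let $\nu_\ast=\nu_\ast(N,b',\boldsymbol\epsilon)$ be the least index $\nu$ with $b'+\nu(1-\epsilon_N)/N\ge 1/2$. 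Since the sampling points increase with $\nu$, the two identities give, for every $z\in\C$,
\[
\mathcal B_N[g^\pm](z)\;=\;\mathcal B_N[G^-](z)+R_N^{\ge}(z)\;=\;\mathcal B_N[G^+](z)-R_N^{<}(z),
\]
where $R_N^{\ge}(z)=\sum_{\nu\ge\nu_\ast}\binom{N}{\nu}z^\nu(1-z)^{N-\nu}D\big(b'+\nu(1-\epsilon_N)/N\big)$ and $R_N^{<}$ is the complementary partial sum. Since $G^\pm$ are entire, $\{z\mapsto G^\pm(b'+z):b'\in[-\eta,\eta]\}$ is a bounded family in $\mathcal H(D_\C(1/2,\rho))$ for every $\rho>1/2$, so Theorem~\ref{sect2new-prop2} applies and gives $\mathcal B_N[G^\pm]\to G^\pm(b'+z)$ uniformly on compact subsets of $D_\C(1/2,\rho)$, uniformly in $b'\in[-\eta,\eta]$ and in the index $\iota'$. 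It therefore remains to prove $R_N^{\ge}\to 0$ near $z=0$ and $R_N^{<}\to 0$ near $z=1$.

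The key estimate is elementary. Because $\eta<1/2$ and $\epsilon_N\ge 0$, one has $\nu_\ast\ge N(1/2-b')/(1-\epsilon_N)\ge N(\tfrac12-\eta)=:N\delta$ for \emph{every} $b'\le\eta$ and every $N$, with $\delta>0$; and because moreover $\epsilon_{\iota',N}\to 0$ uniformly in $\iota'$, one also has $N-\nu_\ast\ge N\delta''$ for some fixed $\delta''>0$ once $N$ is large, uniformly in $b'$ and $\iota'$. All sampling points lie in the fixed compact $[-\eta,1+\eta]$, on which $|D|\le C$. Hence for $|z|\le r$ with $0<r<1$ (so $|1-z|\le 1+r$),
\[
|R_N^{\ge}(z)|\;\le\;C\sum_{\nu\ge\nu_\ast}\binom{N}{\nu}r^\nu(1+r)^{N-\nu}\;\le\;C\,r^{N\delta}\sum_{\nu=0}^{N}\binom{N}{\nu}(1+r)^{N-\nu}\;=\;C\,\big(r^{\delta}(2+r)\big)^N,
\]
using $r^\nu\le r^{N\delta}$ for $\nu\ge\nu_\ast\ge N\delta$. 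Choosing $r=r(\eta)$ small enough that $r^{\delta}(2+r)<1$ makes this tend to $0$ geometrically, uniformly on $\overline{D_\C(0,r)}$ and in $b',\iota'$; this is the convergence on $V_\eta^-:=D_\C(0,r)$. The estimate for $R_N^{<}$ near $z=1$ is identical after the substitution $\nu\mapsto N-\nu$ (and $\delta\mapsto\delta''$), and yields $V_\eta^+:=D_\C(1,r)$ after shrinking $r$ if necessary. Putting the pieces together: on $V_\eta^-$, $\mathcal B_N[g^\pm]=\mathcal B_N[G^-]+R_N^{\ge}\to G^-(b'+z)$, and on $V_\eta^+$, $\mathcal B_N[g^\pm]=\mathcal B_N[G^+]-R_N^{<}\to G^+(b'+z)$, with all the asserted uniformities inherited from Theorem~\ref{sect2new-prop2} and from the geometric decay above.

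The only delicate point is the uniform lower bound $\nu_\ast\ge N\delta$ (and symmetrically $N-\nu_\ast\ge N\delta''$): this is exactly where the hypothesis $\eta<1/2$ is used, ensuring that a fixed positive proportion of the sampling grid stays strictly on each side of $1/2$ for every admissible shift $b'$, uniformly once the $\epsilon_{\iota',N}$ are small. Everything else is bookkeeping; in particular the equality $G^-(1/2)=G^+(1/2)$ enters only through $D(1/2)=0$, i.e.\ to guarantee that $g^\pm$ is continuous, and plays no further role in the convergence statement itself.
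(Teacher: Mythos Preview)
Your proof is correct and follows the same overall architecture as the paper's: split $\mathcal B_N[g^\pm]$ into the Bernstein polynomial of an entire function (handled by Theorem~\ref{sect2new-prop2}) plus a half-sum over the ``wrong'' block of indices, and show that the latter dies geometrically near the opposite endpoint. The genuine difference is in how the half-sum is killed. The paper follows Kantorovitch--Lorentz and takes $V_\eta^\pm$ to be the interiors of the loops of the lemniscates $\mathscr L_c=\{|z|^c|1-z|^{1-c}=c^c(1-c)^{1-c}\}$ for $c\in[1/2-\eta,1/2+\eta]$; on a compact $K^+\subset V_\eta^+$ this yields the sharp-looking bound $(\sup|H|)\,N\,q_\eta(K^+)^N$ for the left half-sum, with $q_\eta(K^+)<1$ coming from the lemniscate geometry. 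You instead take $V_\eta^\pm$ to be small disks $D_\C(0,r)$, $D_\C(1,r)$ and use the crude majorization $r^\nu\le r^{N\delta}$ for $\nu\ge\nu_\ast\ge N\delta$ to get $|R_N^{\ge}(z)|\le C\,(r^\delta(2+r))^N$, then choose $r$ so that $r^\delta(2+r)<1$. This is more elementary (no lemniscates, no appeal to \cite{Lor53}) and entirely self-contained, at the price of producing visibly smaller neighborhoods than the paper's loop regions. For the purposes of the theorem as stated (mere existence of $V_\eta^\pm$) your argument is sufficient, and arguably cleaner; the paper's route has the advantage of giving the classical Kantorovitch description of the domain of convergence.
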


\begin{proof}
For each $c\in [1/2-\eta,1/2+\eta]$, let $\mathscr L_c$ be the lemniscate
$$
\mathscr L_c = \{z\in \C\,:\, |z|^{c} |1-z|^{1-c} = c^c (1-c)^{1-c}\},
$$
$V^-(c)$ (respectively $V^+(c)$) be the interior of its left loop (respectively its right loop) about $0$ (respectively $1$),
$$V^-_\eta = \bigcap_{c\in [1/2-\eta,1/2+\eta]} V^-(c),\qquad {\rm and}\qquad
V^+_\eta= \bigcap_{c\in [1/2 -\eta,1/2 + \eta]} V^+(c).$$
Fix also $\{(\epsilon_{\iota',N})_{N\geq 1}\,:\, \iota'\in I'\}$ such that all sequences $\boldsymbol \epsilon_{\iota'}$ tend to $0$ at infinity uniformly with respect to $\iota'$.
It follows from Theorem \ref{sect2new-prop2} that the two sequences of polynomial maps
\begin{equation*}
\begin{split}
& \Big(z \longmapsto \sum\limits_{\nu=0}^N
\binom{N}{\nu} z^\nu (1-z)^{N-\nu} G^- \Big(b' + \nu \, \frac{1-\epsilon_{\iota',N}}{N}\Big)\Big)_{N\geq 1} \\
& \Big(z \longmapsto \sum\limits_{\nu=0}^N
\binom{N}{\nu} z^\nu (1-z)^{N-\nu} G^+ \Big(b' + \nu \, \frac{1-\epsilon_{\iota',N}}{N}\Big)\Big)_{N\geq 1}
\end{split}
\end{equation*}
converge on any compact subset of the complex plane to respectively
$G^- (b'+z)$ and $G^+(b'+z)$, uniformly
both with respect to $b'\in [-\eta,\eta]$ and the index $\iota'$.
Fix a compact $K^+ \subset V^+_\eta$. Since
$K^+$ is contained in each $V^+(c)$ for $c\in [1/2-\eta,1/2 + \eta]$, there is a constant $q_\eta(K^+)<1$ such that for any $c\in [1/2-\eta,1/2 + \eta]$,
\begin{equation}\label{sect2new-eq13}
\begin{split}
z \in K^+ & \Longrightarrow
\Big(\frac{|z|}{c}\Big)^c
\Big(\frac{|1-z|}{1-c}\Big)^{1-c} \leq q_\eta(K^+) < 1 \\
& \Longrightarrow \frac{c}{|z|}\, \frac{|1 - z|}{1-c} \leq \Big( q_\eta(K^+)\, \frac{c}{|z|}\Big) ^{1/(1-c)} \leq 1,
\end{split}
\end{equation}
see the proof of \cite[Theorem 4.3.1]{Lor53}.
Then one has for each $z\in K^+$, for each $b'\in [-\eta,\eta]$ and $H=G^+$ or $H=G^-$ that
\begin{multline}\label{sect2new-eq14}
\Big|\sum\limits_{\{\nu \in \N\,:
\, \nu \leq (1/2-b') N \}} \binom{N}{\nu} z^\nu (1-z)^{N-\nu}
H \Big(b' + \nu\, \Big(\frac{1- \epsilon_{\iota',N}}{N}\Big)\Big)\Big|
\\
\leq \sup_{[-b',1+b']} |H|
\, \sum\limits_{\{\nu \in \N\,:
\, \nu \leq (1/2-b')\, N \}}
\binom{N}{\nu} \, (1/2-b')^\nu (1/2+b')^{N-\nu}
\Big(\frac{|z|}{1/2-b'}\Big)^\nu
\Big(\frac{|1-z|}{1/2+b'}\Big)^{N-\nu}  \\
\leq \sup_{[-b',1+b']} |H| \, \sum\limits_{\{\nu \in \N\,:
\, \nu \leq (1/2-b') N \}}\Big(\frac{|z|}{1/2-b'}\Big)^\nu
\Big(\frac{|1-z|}{1/2+b'}\Big)^{N-\nu}.
\end{multline}
Since for any $\nu \in \N$ such that $\nu \leq (1/2-b') N$, one has for $z\in K^+$ that
\begin{multline*}
\Big(\frac{|z|}{1/2-b'}\Big)^\nu
\Big(\frac{|1-z|}{1/2+b'}\Big)^{N-\nu} \\
= \Big(\Big(\frac{|z|}{1/2-b'}\Big)^{1/2-b'}
\Big(\frac{|1-z|}{1/2+b'}\Big)^{1/2+b'}\Big)^N \, \Big(
\frac{1/2-b'}{|z|}\, \frac{|1-z|}{1/2+b'}\Big)^{N(1/2-b') -\nu} \leq (q_\eta(K^+))^N
\end{multline*}
according to \eqref{sect2new-eq13}, one concludes that for any $N\in \N^*$ and
$b'\in [-\eta,\eta]$, one has from \eqref{sect2new-eq14} that
\begin{multline}\label{sect2new-eq15}
\sup_{z\in K^+}
\Big|\sum\limits_{\{\nu \in \N\,:
\, \nu \leq (1/2-b') N \}} \binom{N}{\nu} z^\nu (1-z)^{N-\nu}
H \Big(b' + \nu\, \Big(\frac{1- \epsilon_{\iota',N}}{N}\Big)\Big)\Big| \\
\leq (\sup_{[-\eta,1+\eta]} |H|)\, N\, (q_\eta(K^+))^N,
\end{multline}
which means that the sequence of polynomial maps
$$
\Big(z \longmapsto \sum\limits_{\{\nu \in \N\,:
\, \nu \leq (1/2-b') N \}} \binom{N}{\nu} z^\nu (1-z)^{N-\nu}
H \Big(b' + \nu\, \Big(\frac{1- \epsilon_{\iota',N}}{N}\Big)\Big)\Big)_{N\geq 1}
$$
converges uniformly towards $0$ on $K^+$, the convergence being uniform with respect to
$b'\in [-\eta,\eta]$ and to the index $\iota'$. Since $H$ can be either
$G^-$ or $G^+$ and the sequence of polynomial maps
$$
\Big(z \longmapsto \sum\limits_{\nu=0}^N
\binom{N}{\nu} z^\nu (1-z)^{N-\nu} G^+ \Big(b' + \nu \, \frac{1-\epsilon_{\iota',N}}{N}\Big)\Big)_{N\geq 1}
$$
converges uniformly on $K^+$ towards $z\mapsto G^+(b'+z)$,
the convergence being uniform both with respect to $b'\in [-\eta,\eta]$ and the index $\iota'$,
one concludes that the sequence of polynomial maps \eqref{sect2new-eq12A}
converges uniformly on $K^+$ towards $z \longmapsto G^+(b'+z)$, the convergence being uniform both with respect to $b'\in [-\eta,\eta]$ and the index $\iota'$.
A similar reasoning shows that if $K^- \subset V_\eta^-$, the sequence
\eqref{sect2new-eq12A} converges uniformly on $K^-$ to $z \longmapsto G^-(b'+z)$ uniformly both with respect to $b'\in [-\eta,\eta]$ and the index $\iota'$.
\end{proof}

We now prove the following lemma.

\begin{lemma}\label{sect2new-lem1}
Let $A$ be an open interval of $\R$ with length strictly larger than $2$ and
$\psi : A \rightarrow \C$ be a regular supershift in $A$. Let $\theta$ be a smooth test-function with support in $[0,\epsilon]$, where $\epsilon < R-2$ and $A_\epsilon =
\{a \in A\,;\, a-\epsilon \in A\}$. Then the function
$$
\psi * \theta : a \in A_\epsilon \longmapsto \int_0^{\epsilon}
\theta(\alpha)\, \psi(a-\alpha)\, d\alpha = \int_\R \theta(\alpha)\, \psi(a-\alpha)\, d\alpha
$$
is a smooth regular $\C$-supershift in $A_\epsilon$.
\end{lemma}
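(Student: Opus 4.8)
The plan is to show that the two defining properties of a regular $\C$-supershift in Definition~\ref{sect2new-def1} are preserved when $\psi$ is replaced by the convolution $\psi * \theta$, and to observe separately that the smoothness of $\psi * \theta$ follows from classical differentiation under the integral sign (since $\theta \in \mathcal D(\R)$ and $\psi$ is continuous, $\psi * \theta \in \mathcal C^\infty(A_\epsilon)$, with $(\psi*\theta)^{(k)} = \psi * \theta^{(k)}$). The main point is therefore the supershift property. Fix a sequence $\boldsymbol\epsilon = (\epsilon_N)_{N\ge 1}$ in $[0,1[^{\N^*}$ tending to $0$ (or, for part~(2), a whole family $\{\boldsymbol\epsilon_{\iota'}\}_{\iota'\in I'}$ converging to $0$ uniformly in $\iota'$), and write out the $N$-th term of the $({\rm TCSP})_\C$ approximating sequence for $\psi*\theta$ at a point $(a,a')$: it is
$$
S_N(a,a') = \sum_{\nu=0}^N \binom{N}{\nu}\Big(\frac{1+a}{2}\Big)^{N-\nu}\Big(\frac{1-a}{2}\Big)^{\nu}\, (\psi*\theta)\big(a' + h_{N,\nu}^{\boldsymbol\epsilon}\big).
$$

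**Interchanging sum and integral.** First I would insert the definition of $\psi*\theta$ and use linearity (the sum over $\nu$ is finite, so there is no convergence issue) to write
$$
S_N(a,a') = \int_\R \theta(\alpha)\, \Bigg(\sum_{\nu=0}^N \binom{N}{\nu}\Big(\frac{1+a}{2}\Big)^{N-\nu}\Big(\frac{1-a}{2}\Big)^{\nu}\, \psi\big((a'-\alpha) + h_{N,\nu}^{\boldsymbol\epsilon}\big)\Bigg)\, d\alpha .
$$
The inner sum is exactly the $N$-th $({\rm TCSP})_\C$ term for $\psi$ evaluated at $(a, a'-\alpha)$. Here I must check that the pair $(a,a'-\alpha)$ lies in the domain $\mathbb A$ associated to $A$ whenever $(a,a')$ ranges over a compact subset of the domain $\mathbb A_\epsilon$ associated to $A_\epsilon$ and $\alpha\in[0,\epsilon]$; this is a routine verification using $A_\epsilon = \{a\in A : a-\epsilon\in A\}$, $\epsilon < R-2$, and the explicit description of $\mathbb A$ recalled after Definition~\ref{sect1-def2}. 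Since $\psi$ is a regular supershift in $A$, the inner sum converges to $\psi\big((a'-\alpha)+a\big)$ uniformly on compact subsets of $\mathbb A$ (and, in the situation of part~(2), uniformly also with respect to $\iota'$). Combined with the fact that $\theta$ has fixed compact support $[0,\epsilon]$ and is bounded, dominated convergence — or, better, the uniform convergence of the integrand on the relevant compact set times the finite measure $\int|\theta|$ — lets me pass to the limit inside the integral and obtain
$$
\lim_{N\to\infty} S_N(a,a') = \int_\R \theta(\alpha)\, \psi\big((a+a') - \alpha\big)\, d\alpha = (\psi*\theta)(a+a'),
$$
with the convergence uniform on compact subsets of $\mathbb A_\epsilon$.

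**Uniformity in $\iota'$ and conclusion.** For part~(2) of Definition~\ref{sect2new-def1}, the same computation applies verbatim: the uniform-in-$\iota'$ convergence of the inner sums on the compact set $\{(a,a'-\alpha) : (a,a')\in K,\ \alpha\in[0,\epsilon]\}\subset\mathbb A$ — which holds by hypothesis on $\psi$ — passes through the integral against the fixed finite measure $\theta(\alpha)\,d\alpha$, so the convergence of $S_N(a,a')$ to $(\psi*\theta)(a+a')$ on a compact $K\subset\mathbb A_\epsilon$ is uniform with respect to $\iota'$ as well. This establishes that $\psi*\theta$ is a regular $\C$-supershift in $A_\epsilon$; together with the smoothness noted at the outset, the lemma follows. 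The only genuinely delicate step is the bookkeeping that guarantees $(a,a'-\alpha)\in\mathbb A$ throughout — i.e. that shrinking $A$ to $A_\epsilon$ leaves enough room for the translation by $-\alpha$, $\alpha\in[0,\epsilon]$, which is exactly why the hypothesis $\epsilon < R-2$ is imposed — but this is elementary interval arithmetic rather than anything substantive.
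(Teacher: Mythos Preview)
Your proof is correct and follows essentially the same route as the paper's: both arguments interchange the finite sum with the integral against $\theta$, recognize the resulting integrand as the $({\rm TCSP})_\C$ approximant for $\psi$ at the shifted point $(a,a'-\alpha)$, and then pass to the limit using the uniform convergence on the compact set $\{(a,a'-\alpha):(a,a')\in K,\ \alpha\in[0,\epsilon]\}\subset\mathbb A$ together with the uniformity in $\iota'$. Your version is in fact slightly more explicit about the domain bookkeeping (that $(a,a'-\alpha)\in\mathbb A$) and about why the limit passes through the integral, but the underlying idea is identical.
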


\begin{proof}
Let
$$
\mathbb A_\epsilon :=
\{(a,a') \in \R \times A_\epsilon\,:\, a'+[-1,1] \subset A_\epsilon,\ a+a'\in A_\epsilon\}
$$
and $\{(\epsilon_{\iota',N})_{N\geq 1}\,:\, \iota'\in I'\}$ be a family of elements in
$[0,1[^{\N^*}$ which all tend to $0$ at infinity, uniformly with respect to $\iota'$.
For any $(a_0,a'_0)\in \mathbb A_\epsilon$ and any $N\in \N^*$, one has for
$(a,a')$ in a sufficiently small neighborhood $U_{a_0,a'_0}$ of $(a_0,a'_0)$
\begin{multline}\label{sect2new-eq16}
\int_0^\epsilon \theta(\alpha)\,
\Big(\sum\limits_{\nu=0}^N
\binom{N}{\nu} \Big(
\frac{1+a}{2}\Big)^{N-\nu}
\Big(\frac{1-a}{2}\Big)^\nu \psi\Big(a'-\alpha +
\nu \, \frac{1-\epsilon_{\iota',N}}{N}\Big)\Big)\, d\alpha  \\
= \sum\limits_{\nu=0}^N \binom{N}{\nu} \Big(
\frac{1+a}{2}\Big)^{N-\nu}
\Big(\frac{1-a}{2}\Big)^\nu (\psi*\theta)\Big( a' + \nu \, \frac{1-\epsilon_{\iota',N}}{N}\Big).
\end{multline}
Since $\psi$ is a regular $\C$-supershift on $A$, the sequence of functions
\begin{equation*}
\Big( (a,a') \longmapsto \sum\limits_{\nu=0}^N
\binom{N}{\nu} \Big(
\frac{1+a}{2}\Big)^{N-\nu}
\Big(\frac{1-a}{2}\Big)^\nu \psi\Big(a'-\alpha +
\nu \, \frac{1-\epsilon_{\iota',N}}{N}\Big)\Big)_{N\geq 1},
\end{equation*}
considered as indexed by the parameter $\alpha \in [0,\epsilon]$,
converges uniformly on any compact subset of $U_{a_0,a'_0}$ towards
$$
(a,a') \longmapsto \psi (a+ a'-\alpha),
$$
the convergence being also uniform both with respect to $\alpha \in [0,\epsilon]$ and to the index
$\iota'$. As a consequence of \eqref{sect2new-eq16}, the sequence of functions
$$
\Big( (a,a') \longmapsto \sum\limits_{\nu=0}^N \binom{N}{\nu} \Big(
\frac{1+a}{2}\Big)^{N-\nu}
\Big(\frac{1-a}{2}\Big)^\nu (\psi*\theta)\Big( a' + \nu \, \frac{1-\epsilon_{\iota',N}}{N}\Big)\Big)_{N\geq 1}
$$
converges uniformly on any such compact subset of $U_{a_0,a'_0}$ towards
$$
(a,a') \longmapsto \int_0^\epsilon \theta (\epsilon) \psi (a+a'-\alpha)\, d\alpha =
(\psi * \theta) (a+a').
$$
This shows that $\psi* \theta$ is a regular $\C$-supershift in $\mathbb A_\epsilon$.
It is smooth since it inherits the smoothness of the regularizing test-function $\theta$.
\end{proof}

We deduce then from Theorem \ref{sect2new-prop3} the following result.

\begin{theorem}\label{sect2new-thm2}
Let $g^{\pm}$ be the continuous complex valued function with $1/2$ as single non-real analyticity point which is defined
by \eqref{sect2new-eq11} and \eqref{sect2new-eq12}.
There exists an open interval $A_0=]-1-\rho_0,1+\rho_0[\subset \R$ containing $[-1,1]$ such that $a \in A_0 \longmapsto \psi_0(a) = g^{\pm} ((1+a)/2)$ is a regular $\C$-supershift in $A_0$.
As a consequence, the fact that $\psi$ is a smooth regular $\C$-supershift on some open interval
$A\subset \R$ with diameter strictly larger than $2$ does not imply in general that $\psi$ is real analytic on $A$.
\end{theorem}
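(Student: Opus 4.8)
The plan is to work in the rescaled variable $b=\Upsilon^{-1}(a)=(1+a)/2$, in which $\Psi:=\psi_0\circ\Upsilon=g^{\pm}$. By the Remark following Proposition~\ref{sect2new-prop0}, proving that $\psi_0$ is a regular $\C$-supershift on some interval $A_0={]-1-\rho_0,1+\rho_0[}$ reduces to proving that, with $B_0=\Upsilon^{-1}(A_0)={]-\rho_0/2,1+\rho_0/2[}$ and $\mathbb B_0=\Upsilon^{-1}(\mathbb A_0)=\{(b,b')\in\R\times B_0\,:\,b'+[0,1]\subset B_0,\ b+b'\in B_0\}$ as in \eqref{sect2new-eq4bis}, the sampled Bernstein polynomials $(b,b')\mapsto\sum_{\nu=0}^{N}\binom{N}{\nu}b^\nu(1-b)^{N-\nu}\,g^{\pm}\!\big(b'+\nu(1-\epsilon_{\iota',N})/N\big)$ converge in $\mathcal C(\mathbb B_0,\C)$ to $(b,b')\mapsto g^{\pm}(b+b')$, uniformly over the index $\iota'$ of any family of sequences $\boldsymbol\epsilon_{\iota'}\to 0$ (the single-sequence case of condition (1) of Definition~\ref{sect2new-def1} being the singleton family). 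First I would fix $\eta\in{]0,1/2[}$, let $V^-_\eta\ni 0$ and $V^+_\eta\ni 1$ be the complex neighborhoods furnished by Theorem~\ref{sect2new-prop3}, and then choose $\rho_0\in{]0,1[}$ small enough that $\rho_0/2\le\eta$, $[-\rho_0,0]\subset V^-_\eta$ and $[1,1+\rho_0]\subset V^+_\eta$. One checks that $(b,b')\in\mathbb B_0$ forces $b'\in{]-\rho_0/2,\rho_0/2[}\subset[-\eta,\eta]$ and $b=(b+b')-b'\in{]-\rho_0,1+\rho_0[}$.

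The heart of the argument is a splitting of $\mathbb B_0$ according to the position of $b$. For $b$ in the compact $[-\rho_0,0]\subset V^-_\eta$ and $b'\in[-\eta,\eta]$, Theorem~\ref{sect2new-prop3} gives convergence of the sampled Bernstein polynomials to $G^-(b+b')$, uniformly in $b$, $b'$ and $\iota'$; and $b+b'<\rho_0/2<1/2$ yields $G^-(b+b')=g^{\pm}(b+b')$. For $b\in[1,1+\rho_0]\subset V^+_\eta$ the symmetric statement gives the limit $G^+(b+b')$, which equals $g^{\pm}(b+b')$ because $b+b'>1-\rho_0/2>1/2$. For $b\in[0,1]$ no lemniscate loop is available, but there I would use the classical uniform convergence of Bernstein polynomials on $[0,1]$ for the continuous function $s\mapsto g^{\pm}(b'+s)$ (cf.\ \cite{Lor53}); equicontinuity of $\{\,s\mapsto g^{\pm}(b'+s)\,:\,b'\in[-\eta,\eta]\,\}$ makes this uniform in $b'$, while changing the sampling rate $1/N$ to $(1-\epsilon_{\iota',N})/N$ perturbs each polynomial by at most the modulus of continuity of $g^{\pm}$ at $\epsilon_{\iota',N}\to 0$, so the limit remains $g^{\pm}(b'+b)=g^{\pm}(b+b')$, uniformly in $b'$ and $\iota'$. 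The three limits agree on the overlaps $b\in\{0,1\}$, so the common limit $(b,b')\mapsto g^{\pm}(b+b')$ is continuous on $\mathbb B_0$; writing any compact $\mathcal K\subset\mathbb B_0$ as the union of the three compacta $\mathcal K\cap\{b\le 0\}$, $\mathcal K\cap\{0\le b\le 1\}$, $\mathcal K\cap\{b\ge 1\}$, uniform convergence on $\mathcal K$ (uniformly in $\iota'$) follows. By the Remark this is precisely the $({\rm TCSP})_\C$ property on $A_0$ together with condition (2) of Definition~\ref{sect2new-def1}, so $\psi_0$ is a regular $\C$-supershift on $A_0$, which has length $2+2\rho_0>2$ and contains $[-1,1]$.

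For the last assertion I would first note that $\psi_0$ is merely continuous: writing $G^-_\circ(a)=G^-((1+a)/2)$ and $h_\circ(a)=(G^+-G^-)((1+a)/2)$, which are entire, one has $\psi_0=G^-_\circ+h_\circ\,\mathbf 1_{[0,+\infty[}$ with $h_\circ(0)=0$ but $h_\circ'(0)=\tfrac12\bigl(\tfrac{dG^+}{dz}(1/2)-\tfrac{dG^-}{dz}(1/2)\bigr)\neq 0$ by \eqref{sect2new-eq11}. To produce a smooth example I would regularize: pick a test-function $\theta\ge 0$ with $\operatorname{supp}\theta=[0,\epsilon]$ and $\epsilon<2\rho_0=R-2$, so that $0=\inf\operatorname{supp}\theta$. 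By Lemma~\ref{sect2new-lem1}, $\psi:=\psi_0*\theta$ is a smooth regular $\C$-supershift on $(A_0)_\epsilon={]-1-\rho_0+\epsilon,1+\rho_0[}$, an open interval of length $2+2\rho_0-\epsilon>2$ containing $0$. Now $\psi=G^-_\circ*\theta+(h_\circ\mathbf 1_{[0,+\infty[})*\theta$: the first summand is entire, while the second vanishes on ${]-1-\rho_0+\epsilon,0[}$ (there $a-\alpha<0$ for every $\alpha\in\operatorname{supp}\theta$), and for small $a>0$ it equals $\int_0^a h_\circ(a-\alpha)\theta(\alpha)\,d\alpha=h_\circ'(0)\int_0^a(a-\alpha)\theta(\alpha)\,d\alpha+O\!\bigl(a\int_0^a(a-\alpha)\theta(\alpha)\,d\alpha\bigr)$, which is nonzero because $h_\circ'(0)\neq 0$ and $\int_0^a(a-\alpha)\theta(\alpha)\,d\alpha>0$ for every $a>0$ (as $\theta\ge 0$ and $0\in\operatorname{supp}\theta$). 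Hence $(h_\circ\mathbf 1_{[0,+\infty[})*\theta$ vanishes on a left neighborhood of $0$ but on no right neighborhood, so by the identity principle for real analytic functions neither it nor $\psi$ is real analytic at $0\in(A_0)_\epsilon$; this exhibits a smooth regular $\C$-supershift on an interval of diameter $>2$ that is not real analytic.

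The step I expect to demand genuine care is the matching in the second paragraph: the Kantorovich phenomenon of Theorem~\ref{sect2new-prop3} forces the Bernstein limit to be $G^-$ on the left loop and $G^+$ on the right loop, with no loop covering a neighborhood of $1/2$, so $A_0$ must be taken thin enough that, for $(b,b')\in\mathbb B_0$ with $b\notin[0,1]$, the target point $b+b'$ automatically lies on the side of $1/2$ dictated by the relevant loop, whereas on $[0,1]$ itself one falls back on ordinary Bernstein convergence to the continuous function $g^{\pm}$ itself. A secondary technical point is the non-vanishing of $(h_\circ\mathbf 1_{[0,+\infty[})*\theta$ immediately to the right of $0$, which is exactly where the hypotheses $\tfrac{dG^-}{dz}(1/2)\neq\tfrac{dG^+}{dz}(1/2)$ and $0\in\operatorname{supp}\theta$ enter.
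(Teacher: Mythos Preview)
Your proof is correct and follows essentially the same approach as the paper's: you use Theorem~\ref{sect2new-prop3} on the Kantorovich loops $V_\eta^\pm$ to handle $b\notin[0,1]$, classical Bernstein approximation on $[0,1]$ for the middle range, and Lemma~\ref{sect2new-lem1} to produce the smooth counterexample. Your version is in fact more explicit than the paper's in two places---the verification that $G^\mp(b+b')=g^\pm(b+b')$ on the relevant pieces (which the paper leaves implicit), and the argument that $\psi_0*\theta$ fails to be real analytic at $0$ (which the paper dispatches with ``one can choose\dots'')---but the architecture is the same.
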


\begin{proof}
The fact that the first assertion implies the second one is an immediate consequence of Lemma
\ref{sect2new-lem1}. One can choose two entire functions $G^-$ and $G^+$ such that
\eqref{sect2new-eq11} is fulfilled and a smooth regularization function $\theta$
such that $\psi = \psi_0 * \theta$ shares with $\psi_0$ the following property, namely that there exists at least one point of non-real analyticity in $]-1,1[$.
It remains then to prove the first assertion. Let $\eta>0$, $V_\eta^-$ and $V_\eta^+$ as in Theorem
\ref{sect2new-prop3}. Since the family of continuous functions
$$
\big\{x \in [0,1] \longmapsto g^{\pm} (b'+x)\,:\, b'\in [-\eta,\eta]\}
$$
is uniformly Lipschitz, hence admits a uniform modulus of continuity
$[0,1]$, classical facts about Bernstein's approximation of continuous functions on $[0,1]$, see for example \cite[\textsection 1.6]{Lor53}, imply that for each $b'\in [-\eta,\eta]$, the sequence of polynomial functions
$$
\Big( b\in [0,1] \longmapsto \sum\limits_{\nu=0}^N
\binom{N}{\nu} b^\nu (1-b)^{N-\nu} g^{\pm} \Big(b' + \nu \, \frac{1-\epsilon_N}{N}\Big)\Big)_{N\geq 1}
$$
converges uniformly on $[0,1]$ towards $b\longmapsto g^{\pm} (b+b')$, the convergence being uniform both with respect to $b'\in [-\eta,\eta]$ and to the index $\iota'$. If we choose $\epsilon_\eta \in ]0,\eta]$
such that $[-\epsilon_\eta,\epsilon_\eta] \subset V_\eta^{-}$ and $[1-\epsilon_\eta,1+\epsilon_\eta]
\subset V_\eta^+$, Theorem \ref{sect2new-prop3} shows that the same result holds on any compact subset of $]-\epsilon_\eta,1+\epsilon_\eta[$. The first assertion in the theorem follows if one makes the rescaling $b = (a+1)/2$.
\end{proof}

From Theorem \ref{sect2new-thm2} it is immediate the following statement which follows taking for example
$G^-(z) = 1-2z$ and $G^+(z) = 2z-1$, so that $g^{\pm} (x)=|x|$.
\begin{corollary} There exist smooth regular $\C$-supershifts which are not real analytic.
\end{corollary}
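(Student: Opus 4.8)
The plan is to \emph{instantiate} Theorem~\ref{sect2new-thm2} with an explicit pair of entire functions and then mollify so as to land in the smooth category. First I take $G^-(z)=1-2z$ and $G^+(z)=2z-1$. These are entire, and they meet the compatibility conditions \eqref{sect2new-eq11}, since $G^-(1/2)=0=G^+(1/2)$ while $\frac{dG^-}{dz}(1/2)=-2\neq 2=\frac{dG^+}{dz}(1/2)$. The corresponding continuous function $g^{\pm}$ from \eqref{sect2new-eq12} is then $g^{\pm}(b)=|2b-1|$, whose only point of non-real analyticity is $b=1/2$. Feeding this choice into Theorem~\ref{sect2new-thm2} produces $\rho_0\in\,]0,1[$ such that, on the interval $A_0=\,]{-1-\rho_0},1+\rho_0[$ (of length $R=2+2\rho_0>2$), the rescaled map $\psi_0\colon a\mapsto g^{\pm}((1+a)/2)=|a|$ is a regular $\C$-supershift. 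This $\psi_0$ is continuous but manifestly \emph{not} real analytic at $0\in\,]{-1},1[$.

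Next I regularize. I fix $\epsilon\in\,]0,2\rho_0[$, so that $\epsilon<R-2$, and a non-negative test function $\theta\in\mathcal D(\R)$ with $\mathrm{supp}\,\theta=[0,\epsilon]$ and $\theta>0$ on $]0,\epsilon[$ (a standard bump). By Lemma~\ref{sect2new-lem1}, the convolution $\psi:=\psi_0*\theta$, that is $\psi(a)=\int_0^\epsilon\theta(\alpha)\,|a-\alpha|\,d\alpha$, is a \emph{smooth} regular $\C$-supershift on $A_{0,\epsilon}=\{a\in A_0:a-\epsilon\in A_0\}=\,]{-1-\rho_0+\epsilon},1+\rho_0[$, an interval which still contains $0$ because $\epsilon<2\rho_0<1+\rho_0$ (recall $\rho_0<1$). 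It remains only to see that this particular smooth regular $\C$-supershift fails to be real analytic, which simultaneously supplies the explicit witness pointed at before the corollary.

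To that end I differentiate. Since $\frac{d^2}{da^2}|a|=2\,\delta_0$ in the sense of distributions and differentiation commutes with convolution, one gets $\psi''=\big(\tfrac{d^2}{da^2}|a|\big)*\theta=2\,\delta_0*\theta=2\,\theta$ on $A_{0,\epsilon}$. Now no nonzero element of $\mathcal D(\R)$ can be real analytic (the identity theorem would force a compactly supported real analytic function to vanish identically); with our choice of $\theta$, all derivatives of $\theta$ vanish at $0$ while $\theta$ is not identically zero on any neighbourhood of $0$, so $\theta$, and hence $\psi''=2\theta$, is not real analytic at $0$. Because every derivative of a real analytic function is again real analytic, $\psi$ itself is not real analytic at $0\in A_{0,\epsilon}$. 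Thus $\psi$ is a smooth regular $\C$-supershift that is not real analytic, which proves the corollary; already the (non-smooth) map $\psi_0\colon a\mapsto|a|$, i.e.\ $g^{\pm}(x)=|x|$, is a regular $\C$-supershift which is not real analytic, and the mollification only upgrades this to the smooth category.

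As for where the weight lies: there is no serious obstacle at this stage, the analytic content having already been discharged into the parametric Bernstein and Kantorovich statements (Theorems~\ref{sect2new-prop2} and~\ref{sect2new-prop3}), into Theorem~\ref{sect2new-thm2}, and into the convolution-stability Lemma~\ref{sect2new-lem1}. The one point that genuinely needs a second look is whether mollification might secretly restore analyticity; this is ruled out by the identity $\psi''=2\theta$, which transfers the (automatic) non-analyticity of the bump $\theta$ directly to $\psi$. A minor bookkeeping check is that $\epsilon$ be taken small enough that $A_{0,\epsilon}$ retains length $>2$ and still contains the detection point $0$, which is automatic since $\rho_0<1$.
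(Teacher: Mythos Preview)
Your proof is correct and follows exactly the route the paper intends: instantiate Theorem~\ref{sect2new-thm2} with $G^-(z)=1-2z$, $G^+(z)=2z-1$ (so $\psi_0(a)=|a|$), then invoke Lemma~\ref{sect2new-lem1} to mollify. The paper itself leaves the non-analyticity of $\psi_0*\theta$ as an unproved assertion (``one can choose \ldots\ $\theta$ such that $\psi=\psi_0*\theta$ shares with $\psi_0$ \ldots\ at least one point of non-real analyticity''); your computation $\psi''=2\theta$ makes this step explicit and is a genuine improvement in rigor over the paper's treatment.
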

\section{Operations preserving supershift}
Since it is not immediate to establish when a function has the regular $\C$-supershift property,
we conclude this paper by analyzing which operations preserve this property.
The following theorems mention two of them.

\begin{theorem}\label{sect2new-thm3}
Given an open interval $A$ of $\R$ with diameter strictly larger than $2$, the class of regular $\C$-supershifts on $A$ is stable by multiplication by the map $a\in A \longmapsto a\in \C$, hence by any complex valued polynomial map
$a\in A \longmapsto p(a) \in \C$.
\end{theorem}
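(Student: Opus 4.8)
The plan is to deduce everything from the single case of multiplication by the map $a\mapsto a$. The two conditions of Definition~\ref{sect2new-def1} depend on $\psi$ only through the polynomial expressions \eqref{sect2new-eq10}, in which $\psi$ enters linearly, and both conditions are stable under finite linear combinations; hence the regular $\C$-supershifts on $A$ form a $\C$-vector space, and the constant function $1$ is one by Theorem~\ref{sect2new-thm1}. Consequently it is enough to prove that if $\psi$ is a regular $\C$-supershift on $A$ then so is $\widetilde\psi:a\in A\longmapsto a\,\psi(a)$: iterating $k$ times then gives $a\mapsto a^k\psi(a)$, and taking linear combinations gives $a\mapsto p(a)\psi(a)$ for every polynomial $p$ (and $p$ itself, by taking $\psi=1$).

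To handle $\widetilde\psi$ I would fix $(a,a')\in\mathbb A$ and a sequence $\boldsymbol \epsilon=(\epsilon_N)_{N\geq1}$ in $[0,1[^{\N^*}$ tending to $0$, and start from the trivial splitting obtained by writing $\widetilde\psi(t)=t\,\psi(t)$ with $t=a'+h_{N,\nu}^{\boldsymbol \epsilon}$:
\begin{multline*}
\sum_{\nu=0}^N C_{H^{\boldsymbol 0},\nu}(N,a)\,\big(a'+h_{N,\nu}^{\boldsymbol \epsilon}\big)\,\psi\big(a'+h_{N,\nu}^{\boldsymbol \epsilon}\big)\\
= a'\sum_{\nu=0}^N C_{H^{\boldsymbol 0},\nu}(N,a)\,\psi\big(a'+h_{N,\nu}^{\boldsymbol \epsilon}\big)
+\sum_{\nu=0}^N h_{N,\nu}^{\boldsymbol \epsilon}\,C_{H^{\boldsymbol 0},\nu}(N,a)\,\psi\big(a'+h_{N,\nu}^{\boldsymbol \epsilon}\big).
\end{multline*}
The first sum on the right is the $N$-th superoscillating sum attached to $\psi$, which converges to $\psi(a+a')$; hence the first term converges to $a'\,\psi(a+a')$, uniformly on compact subsets of $\mathbb A$ (where $a'$ stays bounded) and uniformly in the family index $\iota'$. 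Everything then reduces to showing that the second sum converges to $a\,\psi(a+a')$.

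For that I would use the two elementary identities
\begin{gather*}
h_{N,\nu}^{\boldsymbol \epsilon}=(1-2\epsilon_N)-\frac{2(1-\epsilon_N)}{N}\,\nu,\\
\nu\,C_{H^{\boldsymbol 0},\nu}(N,a)=\frac{N(1-a)}{2}\,C_{H^{\boldsymbol 0},\nu-1}(N-1,a)\qquad(1\leq\nu\leq N),
\end{gather*}
the second being Pascal's rule for binomial coefficients. Substituting and relabelling $\mu=\nu-1$ rewrites the second sum above as
\begin{multline*}
(1-2\epsilon_N)\sum_{\nu=0}^N C_{H^{\boldsymbol 0},\nu}(N,a)\,\psi\big(a'+h_{N,\nu}^{\boldsymbol \epsilon}\big)\\
-\,(1-\epsilon_N)(1-a)\sum_{\mu=0}^{N-1} C_{H^{\boldsymbol 0},\mu}(N-1,a)\,\psi\big(a'+h_{N,\mu+1}^{\boldsymbol \epsilon}\big).
\end{multline*}

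The key point — and the only step where some care is needed — is the reindexing identity
$$
h_{N,\mu+1}^{\boldsymbol \epsilon}=h_{N-1,\mu}^{\boldsymbol \delta}\quad(0\leq\mu\leq N-1),\qquad\text{where}\qquad \delta_M:=\frac{1+M\,\epsilon_{M+1}}{M+1}.
$$
Both sides are affine in $\mu$, so it suffices to verify the identity at $\mu=0$ and $\mu=N-1$, which is a one-line computation. Hence the second double sum in the last display is exactly the $(N-1)$-th superoscillating sum attached to $\psi$ for the sampling sequence $\boldsymbol \delta$; and since $0\leq\delta_M\leq\frac{1}{M+1}+\epsilon_{M+1}$ one has $\delta_M\in[0,1[$ and $\delta_M\to0$, with uniform convergence in $\iota'$ whenever the analogous convergence holds for $\boldsymbol \epsilon$. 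Because $\psi$ is a regular $\C$-supershift, that sum converges to $\psi(a+a')$, uniformly on compacts of $\mathbb A$ and in $\iota'$; together with $1-2\epsilon_N\to1$ and $(1-\epsilon_N)(1-a)\to1-a$ (again with the same uniformity), the second sum converges to $\psi(a+a')-(1-a)\psi(a+a')=a\,\psi(a+a')$. Adding the two contributions, the $N$-th superoscillating sum attached to $\widetilde\psi$ converges to $(a+a')\psi(a+a')=\widetilde\psi(a+a')$, uniformly on compact subsets of $\mathbb A$ and in $\iota'$, which is precisely the assertion that $\widetilde\psi$ is a regular $\C$-supershift on $A$. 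The main obstacle is thus purely combinatorial: pinning down the perturbed sampling sequence $\boldsymbol \delta$ and checking that the two uniformity clauses of Definition~\ref{sect2new-def1}, as well as the scalar prefactors $1-2\epsilon_N$ and $(1-\epsilon_N)(1-a)$, behave correctly under this reindexing.
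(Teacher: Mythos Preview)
Your proposal is correct and follows essentially the same route as the paper's proof. Both arguments split off the factor $a'$ (respectively $b'$) trivially and then use the identity $\nu\binom{N}{\nu}=N\binom{N-1}{\nu-1}$ to rewrite the remaining piece as an order-$(N-1)$ supershift sum for $\psi$ with perturbed parameters; the only cosmetic difference is that the paper passes to the rescaled variable $b=(1+a)/2$ and absorbs the perturbation into a shift $b'\mapsto b'+(1-\epsilon_N)/N$ together with a modified sequence $\epsilon^{[1]}$, whereas you keep $a'$ fixed and package the same shift entirely into a new sampling sequence $\boldsymbol\delta$ with $\delta_M=(1+M\epsilon_{M+1})/(M+1)$.
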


\begin{proof}
Let $\{(\epsilon_{\iota',N})_{N\geq 1}\,:\, \iota'\in I'\}$ be a family of elements in
$[0,1[^{\N^*}$ which all tend to $0$ at infinity, uniformly with respect to $\iota'$.
Let $B= \Upsilon^{-1}(A)$ and $\Psi = \psi \circ \Upsilon$.
An easy computation shows that if $\Phi$ is the function $\Phi : b\in B \longmapsto b\, \Psi(b)$
then, for any $N\in \N^*$ with $N\geq 2$ and any
$$
(b,b')
\in \mathbb B = \{(b,b')\in \R \times B\,:\,
b'+[0,1]\subset B,\ b+b'\in B\},
$$
one has
\begin{multline}\label{sect2new-eq17}
\sum\limits_{\nu=0}^N
\binom{N}{\nu} b^\nu (1-b)^{N-\nu}
\Phi \Big(b' + \nu\, \frac{1-\epsilon_{\iota',N}}{N}\Big) \\
=
(1-\epsilon_N)\, b
\sum\limits_{\nu=0}^{N-1} \binom{N-1}{\nu}
b^\nu (1-b)^{N-1-\nu} \Psi
\Big(b' + \frac{1-\epsilon_{\iota',N}}{N} +
\nu\, \frac{1-\epsilon_{\iota',N-1}^{[1]}}{N-1}\Big) \\
+ b'\, \sum\limits_{\nu=0}^N
\binom{N}{\nu} b^\nu (1-b)^{N-\nu}
\Psi \Big(b' + \nu\, \frac{1-\epsilon_{\iota',N}}{N}\Big),
\end{multline}
where
\begin{equation}\label{sect2new-eq17A}
1-\epsilon_{\iota',N-1}^{[1]} = \frac{N-1}{N} (1- \epsilon_{\iota',N}).
\end{equation}
Let $K$ be a compact subset of $\mathbb B$ and $\chi_K>0$ such that
$b'+b + \tau \in B$ for any $(b,b')\in K$ and $0\leq \tau \leq \chi_K$.
Since $\psi$ is a regular $\C$-supershift, there exists, for any
$\varepsilon >0$, an integer $N_{K,\varepsilon}$ such that
\begin{multline*}
N \geq N_{K,\varepsilon} \\ \Longrightarrow
\sup_{\stackrel{((b,b'),\tau) \in K \times [0,\chi_K]}{\iota'\in I'}}
\Big|\Psi (b'+ \tau  + b) -  \sum\limits_{\nu=0}^{N-1} \binom{N-1}{\nu}
b^\nu (1-b)^{N-1-\nu} \Psi
\Big(b' + \tau  +
\nu\, \frac{1-\epsilon_{\iota',N-1}^{[1]}}{N-1}\Big)\Big| \leq \varepsilon.
\end{multline*}
One has in particular for $N\geq \max (N_{K,\varepsilon}, 1/\chi_K)$ that
$$
\sup_{\stackrel{(b,b') \in K}{\iota'\in I'}}
\Big|\Psi \Big(b'+ \frac{1-\epsilon_{\iota',N}}{N} + b\Big) -
\sum\limits_{\nu=0}^{N-1} \binom{N-1}{\nu}
b^\nu (1-b)^{N-1-\nu} \Psi
\Big(b' + \frac{1-\epsilon_{\iota',N}}{N}  +
\nu\, \frac{1-\epsilon_{\iota',N-1}^{[1]}}{N-1}\Big)\Big|\leq \varepsilon.
$$
Observe that the sequences of functions
\begin{equation*}
\begin{split}
& \Big( (b,b') \in \mathbb B \longmapsto \Psi \Big(b'+ \frac{1-\epsilon_{\iota',N}}{N} + b\Big)\Big)_{N\geq 1}\\
& \Big( (b,b') \in \mathbb B
\longmapsto \sum\limits_{\nu=0}^N
\binom{N}{\nu} b^\nu (1-b)^{N-\nu}
\Psi \Big(b' + \nu\, \frac{1-\epsilon_{\iota',N}}{N}\Big)\Big)_{N\geq 1}
\end{split}
\end{equation*}
converge both locally uniformly on $\mathbb B$ towards $(b,b') \longmapsto \Psi(b+b')$ on $K$, the convergence being uniform with respect to the index $\iota'$. Comparing the limits when $N$ tends to infinity of both sides of
\eqref{sect2new-eq17}, one gets that the sequence of functions
$$
\Big( (b,b') \in \mathbb B
\longmapsto \sum\limits_{\nu=0}^N
\binom{N}{\nu} b^\nu (1-b)^{N-\nu}
\Phi \Big(b' + \nu\, \frac{1-\epsilon_{\iota',N}}{N}\Big)\Big)_{N\geq 1}
$$
converges locally uniformly on $\mathbb B$ towards $(b,b') \longmapsto \Phi(b+b')$,
the convergence being uniform with respect to the index $\iota'$. After the rescaling
$b = (1+a)/2$, one gets that $a \in A \longmapsto (2a-1) \psi(a)$ is a regular
$\C$-supershift on $a$, so is $a \in A \longmapsto a \, \psi(a)$ since $\psi$ already is. The proof of Theorem \ref{sect2new-thm3} is thus completed.
\end{proof}


\begin{theorem}\label{sect2new-thm4}
Given an open interval $A$ of $\R$ with diameter strictly larger than $2$, the class of regular $\C$-supershifts on $A$ is stable by primitivization, namely, if $\psi : A \rightarrow \C$ is a $\C$-regular supershift and
$a_0 \in A$, then
$$
a \in A \longmapsto \int_{a_0}^a \psi(\alpha)\, d\alpha
$$
is a $\C$-regular supershift.
\end{theorem}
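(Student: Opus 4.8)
The plan is to follow the pattern of the proof of Theorem~\ref{sect2new-thm3}. Re-scale through $\Upsilon$: set $B=\Upsilon^{-1}(A)$, $\Psi=\psi\circ\Upsilon$, $b_0=\Upsilon^{-1}(a_0)$ and $\Phi(b)=\int_{b_0}^{b}\Psi(\beta)\,d\beta$, so that $a\mapsto\int_{a_0}^{a}\psi(\alpha)\,d\alpha$ coincides with $2\,(\Phi\circ\Upsilon^{-1})$. Since a scalar multiple of a regular $\C$-supershift is again one (trivially, or by Theorem~\ref{sect2new-thm3}), it suffices to check that $\Phi$ satisfies, in the re-scaled form of \eqref{sect2new-eq4}--\eqref{sect2new-eq4bis}, conditions (1)--(2) of Definition~\ref{sect2new-def1}: for every family $\{\boldsymbol\epsilon_{\iota'}=(\epsilon_{\iota',N})_{N\ge1}\,:\,\iota'\in I'\}$ in $[0,1[^{\N^*}$ tending to $0$ uniformly with respect to $\iota'$, the polynomial maps
$$
S_N(b,b'):=\sum_{\nu=0}^{N}\binom{N}{\nu}b^{\nu}(1-b)^{N-\nu}\,\Phi\Big(b'+\nu\,\tfrac{1-\epsilon_{\iota',N}}{N}\Big)
$$
converge to $\Phi(b+b')$ uniformly on every compact subset of $\mathbb B$ and uniformly with respect to $\iota'$.

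The argument rests on two elementary observations. First, at $b=0$ only the term $\nu=0$ survives, hence $S_N(0,b')=\Phi(b')$ \emph{identically in $N$}; moreover $(b,b')\in\mathbb B$ forces $(0,b')\in\mathbb B$ (the constraint $b'+[0,1]\subset B$ already gives $b'\in B$), and for fixed $b'$ the $b$-slice of $\mathbb B$ is an interval containing both $0$ and $b$, so one may write $S_N(b,b')=\Phi(b')+\int_{0}^{b}\partial_\beta S_N(\beta,b')\,d\beta$. Second, the classical identity for the derivative of a Bernstein polynomial, combined with $\Phi'=\Psi$ and the substitution turning a forward difference of $\Phi$ into an integral of $\Psi$, gives
$$
\partial_b S_N(b,b')=N\int_{0}^{(1-\epsilon_{\iota',N})/N}\Big(\sum_{\mu=0}^{N-1}\binom{N-1}{\mu}b^{\mu}(1-b)^{N-1-\mu}\,\Psi\big((b'+t)+\mu\,\tfrac{1-\tilde\epsilon_{\iota',N-1}}{N-1}\big)\Big)\,dt,
$$
where $1-\tilde\epsilon_{\iota',N-1}:=\tfrac{N-1}{N}(1-\epsilon_{\iota',N})$ — precisely the re-indexing already used in \eqref{sect2new-eq17A}; the sequences $\tilde{\boldsymbol\epsilon}_{\iota'}$ again lie in $[0,1[^{\N^*}$ and tend to $0$ uniformly with respect to $\iota'$.

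The next step is to invoke that $\psi$, hence $\Psi$, is a regular $\C$-supershift, applied to the family $\{\tilde{\boldsymbol\epsilon}_{\iota'}\}$: the inner sum above is the $(N-1)$-st term of the $({\rm TCSP})_\C$ sum for $\Psi$ evaluated at $(b,b'+t)$, so it converges to $\Psi(b+b'+t)$ uniformly on every compact subset of $\mathbb B$ and uniformly with respect to $\iota'$; since $t\in[0,(1-\epsilon_{\iota',N})/N]\subset[0,1/N]$ and $\Psi$ is uniformly continuous on the relevant compact set, $\Psi(b+b'+t)$ may be replaced by $\Psi(b+b')$ at the cost of a uniformly vanishing error. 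Because the weight satisfies $N\cdot(1-\epsilon_{\iota',N})/N=1-\epsilon_{\iota',N}\to1$ (uniformly with respect to $\iota'$), the averaging operator in the displayed formula yields $\partial_b S_N(b,b')\to\Psi(b+b')$ uniformly on every compact subset of $\mathbb B$ and uniformly with respect to $\iota'$. Integrating in $b$ and using $S_N(0,b')=\Phi(b')$ together with $\int_0^b\Psi(\beta+b')\,d\beta=\Phi(b+b')-\Phi(b')$, one gets $S_N(b,b')\to\Phi(b+b')$ with the required uniformities, which is exactly conditions (1)--(2) of Definition~\ref{sect2new-def1} for $\Phi$; undoing the re-scaling finishes the proof.

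The analytic content is elementary — the derivative-of-Bernstein identity and the averaging limit — so I expect the only real obstacle to be organizational bookkeeping of the uniformities: one has to verify that, for a fixed compact $K\subset\mathbb B$, the sets of points $(b,b'+t)$ with $(b,b')\in K$, $t\in[0,1/N]$, and of points $(\beta,b')$ with $(b,b')\in K$ and $\beta$ between $0$ and $b$, stay inside one fixed compact subset of $\mathbb B$ (this uses that $\mathbb B$ is open and that $x\mapsto{\rm dist}(x,\R\setminus B)$ is concave along segments of $B$), and that passing from $\{\boldsymbol\epsilon_{\iota'}\}$ to the derived family $\{\tilde{\boldsymbol\epsilon}_{\iota'}\}$ preserves uniform convergence to $0$. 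Once these are in place, all estimates propagate uniformly in the family index, exactly as in the proof of Theorem~\ref{sect2new-thm3}.
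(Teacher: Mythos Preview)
Your proof is correct and follows essentially the same route as the paper: both hinge on the Bernstein derivative identity, the Fundamental Theorem of Calculus to turn forward differences of $\Phi$ into integrals of $\Psi$, the re-indexing $1-\tilde\epsilon_{\iota',N-1}=\tfrac{N-1}{N}(1-\epsilon_{\iota',N})$, and the regular-supershift hypothesis on $\Psi$. The only organizational difference is that the paper first reduces to a fixed $b'_0$ (handling the $b'$-variation by a separate integration of the $\Psi$-sums from $b'_0$ to $b'$) and then carries out the derivative computation at that single $b'_0$, whereas you perform the derivative computation uniformly in $b'$ from the start and integrate in $b$ from $0$; both lead to the same estimates and the same uniformity bookkeeping.
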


\begin{proof}
Let $\psi$ be a regular $\C$-supershift in $A$, $B=\Upsilon^{-1}(A)$ and $\Psi = \psi\circ \Upsilon$, where, as usual,
$\Upsilon (a)=2a-1\in \R$.
Let $\{(\epsilon_{\iota',N})_{N\geq 1}\,:\, \iota'\in I'\}$ be a family of elements in
$[0,1[^{\N^*}$ which all tend to $0$ at infinity, uniformly with respect to $\iota'$.
If $\mathbb B = \{(b,b')\in \R \times B\,:\,
b'+[0,1]\subset B,\ b+b'\in B\}$, the sequence of functions
$$
\Big((b,b') \in \mathbb B
\longmapsto \sum\limits_{\nu=0}^N
\binom{N}{\nu} b^\nu (1-b)^{N-\nu} \Psi
\Big( b' + \nu \, \frac{1-\epsilon_{\iota',N}}{N}\Big)\Big)_{N\geq 1}
$$
converges locally uniformly on $\mathbb B$ towards $(b,b') \longmapsto \Psi(b+b')$, the convergence being uniform with respect to the index $\iota'$. Let us call (P) such property. Let $b_0' \in B$ such that $b_0' + [0,1]\subset B$. Proving that any primitive of $\Psi$ in $A$ remains a regular $\C$-supershift amounts to prove that the function
$$
b \in B \longmapsto \Phi(b) = \int_{b_0'}^b \Psi(\beta)\, d\beta
$$
inherits from $\Psi$ the property (P). Theorem \ref{sect2new-thm4} will then follows after the rescaling
$b= \Upsilon^{-1}(a) = (1+a)/2$. We first observe that the sequence of functions
$$
\Big((b,b') \in \mathbb B \longmapsto
\int_{b_0'}^{b'} \Big( \sum\limits_{\nu=0}^N
\binom{N}{\nu} b^\nu (1-b)^{N-\nu} \Psi \Big(\beta + \nu\, \frac{1-\epsilon_{\iota',N}}{N}\Big)\Big)\, d\beta\Big)_{N\geq 1}
$$
converges locally uniformly on $\mathbb B$ towards
$$
(b,b') \longmapsto \int_{b_0}^{b'} \Psi(\beta + b)\, d\xi = \Phi(b+b') - \Phi(b+b'_0).
$$
Therefore, if one proves that the sequence of functions
\begin{equation}\label{sect2new-eq17B}
\Big( b\in \{b\in \R\,:\, b'_0 + b \in B\} \longmapsto \sum\limits_{\nu=0}^N
\binom{N}{\nu} b^\nu (1-b)^{N-\nu}\,
\Phi \Big(b'_0 + \nu\,
\frac{1- \epsilon_{\iota',N}}{N}\Big)\Big)_{N\geq 1}
\end{equation}
converges locally uniformly on $-b'_0 + B = \{b\in \R\,:\, b'_0 + b\in B\}$
towards $b \longmapsto \Phi(b+ b'_0)$, the convergence on each compact subset being
uniform with respect to the index $\iota'$, the fact that $\Phi$ inherits from $\Psi$ the property (P)
will follow. One has on $-b'_0 + B$ that for any $N\geq 2$,
\begin{multline}\label{sect2new-eq18}
\frac{d}{db} \Big(\sum_{\nu=0}^N
\binom{N}{\nu}\, b^\nu (1-b)^{N-\nu} \Phi\Big( b'_0 + \nu\, \frac{1-\epsilon_{\iota',N}}{N}\Big)\Big)
\\
= (1-\epsilon_{\iota',N})\, \sum_{\nu=0}^{N-1} \binom{N-1}{\nu}\, b^\nu (1-b)^{N-1-\nu}\,
\frac{\Phi \Big( b'_0 + (\nu+1)\, \displaystyle{\frac{1 - \epsilon_{\iota',N}}{N}}\Big) -
\Phi \Big( b'_0 + \nu \, \displaystyle{\frac{1 - \epsilon_{\iota',N}}{N}}\Big)}
{\displaystyle{\frac{1-\epsilon_{\iota',N}}{N}}}\\
= (1- \epsilon_{\iota',N})
\int_0^1
\Big(
\sum\limits_{\nu=0}^{N-1}
\binom{N-1}{\nu}
b^\nu (1-b)^{N-\nu}
\Psi \Big( b'_0 + \frac{1-\epsilon_{\iota',N}}{N}\, \xi +
\nu \, \frac{1 - \epsilon_{\iota',N-1}^{[1]}}{N-1}\Big)\Big)\, d\xi,
\end{multline}
where $\epsilon_{\iota',N-1}^{[1]}$ is defined in terms of $\epsilon_{\iota',N}$ as in \eqref{sect2new-eq17A}. Let $K^\circ$ be a compact subset of $-b'_0 + B$,
$\chi_{K^\circ} >0$ such that $b+b'_0 + \tau \in B$ for any $\tau \in [0,\chi_{K^\circ}]$,
and $\varepsilon >0$. It follows from the fact that
$\Psi$ has the property (P) that there exists $N_{K^\circ,\varepsilon}$ that
\begin{multline*}
N \geq N_{K^\circ,\varepsilon}\\ \Longrightarrow
\sup_{\stackrel{(b,\tau)
\in K^\circ \times [0,\chi_{K^\circ}]}{\iota \in I'}}
\Big| \Psi (b'_0 + \tau + b) - \sum\limits_{\nu=0}^{N-1}
\binom{N-1}{\nu} b^\nu (1-b)^{N-\nu}
\Psi \Big( b'_0 + \tau
+ \nu  \frac{1 - \epsilon_{\iota',N-1}^{[1]}}{N-1}\Big)\Big| \leq \varepsilon.
\end{multline*}
One has in particular for $N \geq \max (N_{K^\circ,\varepsilon}, 1/\chi_{K^\circ})$ such that
$$
\Big|
\Psi\Big(b'_0 + \frac{1-\epsilon_{\iota',N}}{N}\, \xi +b\Big)
- \sum\limits_{\nu=0}^{N-1}
\binom{N-1}{\nu}
b^\nu (1-b)^{N-\nu}
\Psi \Big( b'_0 + \frac{1-\epsilon_{\iota',N}}{N}\, \xi +
\nu \, \frac{1 - \epsilon_{\iota',N-1}^{[1]}}{N-1}\Big)\Big|
\leq \varepsilon
$$
for any $(b,\xi) \in K^\circ \times [0,1]$ and any $\iota \in I'$.
It then follows from \eqref{sect2new-eq18} that the sequence of functions
$$
\Big( b\in K^\circ
\longmapsto \frac{d}{db} \Big(\sum_{\nu=0}^N
\binom{N}{\nu}\, b^\nu (1-b)^{N-\nu} \Phi\Big( b'_0 + \nu\, \frac{1-\epsilon_{\iota',N}}{N}\Big)\Big)
\Big)_{N\geq 1}
$$
converges uniformly on $K$ towards $b\longmapsto \Psi(b'_0 +b)$, the convergence being uniform with respect to the index $\iota'\in I'$. This implies that the sequence of corresponding primitives vanishing at
$0$, namely the sequence of functions \eqref{sect2new-eq17B}, converges locally uniformly on
$-b'_0 + B$ towards $\Phi(b+ b'_0)$, which concludes the proof of Theorem \ref{sect2new-thm4}.
\end{proof}



\begin{thebibliography}{99}
\bibitem{AAV88}
Y. Aharonov, D. Albert, L. Vaidman, {\it How the Results of a Measurement of a component of a spin $1/2$ particle can turn out to be 100?}, Phys.
Rev. Lett., {60} (1988), pp. 1351--1354.

\bibitem{ABCS22} Y. Aharonov, J. Behrndt, F. Colombo, P. Schlosser,
{\it A unified approach to Schr\"odinger evolution of superoscillations and supershifts},
 J. Evol. Equ. 22 (2022), no. 1, Paper No. 26, 31 pp.

\bibitem{ACSSST21}
Y. Aharonov, F. Colombo, I. Sabadini, T. Shushi, D.C. Struppa, J. Tollaksen, {\it A new method to generate superoscillating
functions and supershifts}, Proc. A. 477 (2021), no. 2249, Paper No. 20210020, 12 pp.


\bibitem{ACJSSST22} Y. Aharonov,  F. Colombo, A. N. Jordan, I. Sabadini, T. Shushi, D.C. Struppa, J. Tollaksen, \textit{On superoscillations and supershifts in several variables},
 Quantum Stud. Math. Found., 9 (2022), no. 4, 417-433.

\bibitem{ACSST13} Y. Aharonov, F. Colombo, I. Sabadini, D.C. Struppa, J. Tollaksen, \textit{On
the Cauchy problem for the Schr\"odinger equation with superoscillatory
intitial data}, J. Math. Pure Appl., {99} (2013), (9),  pp. 165--173.

\bibitem{ACSST15} Y. Aharonov, F. Colombo, I. Sabadini, D.C. Struppa, J. Tollaksen,
\textit{Superoscillating sequences as solutions of generalized Schr\"odinger equations}, J. Math. Pure Appl., {103} (2015), pp. 522--534.

\bibitem{ACSST16} Y. Aharonov, F. Colombo, I. Sabadini, D.C. Struppa, J. Tollaksen, {\em Superoscillating sequences in several variables}, J. Fourier Anal. Appl., 22 (2016), no. 4, 751--767.

\bibitem{ACSST17A} Y. Aharonov,  F. Colombo,  I. Sabadini, D.C. Struppa, J. Tollaksen,
{\em The mathematics of superoscillations}, Mem. Amer. Math. Soc., 247 (2017), no. 1174, v+107 pp.

\bibitem{APR91} Y. Aharonov, S. Popescu, D. Rohrlich, \textit{How a soft photon can emit a hard photon}, TAUP 1847-90, Tel Aviv University, Tel Aviv, Israel, 1991.

\bibitem{APR20} Y. Aharonov, S. Popescu, D. Rohrlich, \textit{On conservation laws in quantum mechanics}, Proceedings National Academy of Sciences, {\bf 118} n.1, e1921529118, (2020).

\bibitem{bcss23} J. Behrndt, F. Colombo, P. Schlosser, D. C. Struppa, {\em Integral representation of superoscillations via complex Borel measures and their convergence}, Trans. Amer. Math. Soc., 376 (2023), 6315-6340.

\bibitem{Bern35} S. Bernstein,
\textit{Sur la convergence de certaines suites de polyn\^omes}, J. Math. Pures Appl.,  15, no. 9 (1935), pp. 345-358.

\bibitem{BCSS14} R. Buniy, F. Colombo, I. Sabadini, D. C. Struppa, \textit{Quantum Harmonic Oscillator with superoscillating initial datum}, J. Math. Phys., {55}, 113511 (2014).

\bibitem{CGS19}  F. Colombo, J. Gantner, D.C. Struppa, {\em Evolution by Schr\"odinger equation of Aharonov-Berry superoscillations in centrifugal potential}, Proc. A., 475 (2019), no. 2225, 20180390, 17 pp.

\bibitem{CSSY21} F. Colombo, I. Sabadini, D.C. Struppa, A. Yger,
{\it Gauss sums, superoscillations and the Talbot carpet}, J. Math. Pures Appl., {147} (2021) pp. 163--178.

\bibitem{CSSY22} F. Colombo, I. Sabadini, D.C. Struppa, A. Yger,
{\it Superoscillating sequences and supershifts for families of generalized functions},
 Complex Anal. Oper. Theory,  {16}, n 34 (2022).

\bibitem{dressel} J. Dressel, M. Malik, F. Miatto, A. N. Jordan,
R. W. Boyd, {\em Colloquium: Understanding Quantum Weak Values: Basics and Applications},
Review of Modern Physics, {\bf 86}, (2014) 307.

\bibitem{GS86} B. Gaveau, L.S. Schulman,
\textit{Explicit time-dependent Schr\"odinger propagators},
 J. Phys. A, 19 (1986), no. 10, 1833-1846.

\bibitem{Kanto31} L. Kantorovich,
\textit{Sur la convergence de la suite des polyn\^omes de S. Bernstein en dehors de l'intervalle
fondamental}, Bull. Acad. Sci. URSS, (1931), pp. 1103-1115.

\bibitem{Lor53} G.G. Lorentz,
\textit{Bernstein polynomials}, Toronto, 1953 (second edition : Chelsea Publishing Company, New York, 1986).

\bibitem{McCo22} K. McCormick, \textit{Puzzling quantum scenario appears not to conserve energy, Quantamagazine}, May 16, 2022, quantamagazine.org, retrieved on 12.09.2022.

\bibitem{OrtW13} N. Ortner, P. Wagner,
\textit{Distribution-Valued Analytic Functions-Theory and Applications},
edition Swk, Hamburg, 2013.
\
bibitem{Phil03}
G. M. Phillips, \textit{Interpolation and Approximation by Polynomials}, CMS Books in Mathematics, Springer Science + Business Media New York, 2003.


\bibitem{Schiff93}
J. L. Schiff, \textit{Normal families}, Universitext, Springer-Verlag, New York, 1993.

\bibitem{Tayl68}
B.A. Taylor, {\it Some locally convex spaces of entire functions}, p. 431-467 in
{\em Entire functions and related parts of analysis},
Proceedings of Symposia in Pure Mathematics \textbf{11}, American Mathematical Society, Providence, 1968.

\bibitem{Titch39} E. C. Titchmarsh, {\it The theory of functions}, Oxford, 1939.

\bibitem{Wolf81} K.B. Wolf, {\it On Time-Dependent Quadratic Quantum Hamiltonians},  SIAM J. Appl. Math., 40 (1981), no. 3, 419-431.
    
\end{thebibliography}
\end{document}